\def\MRbibitem{\@ifnextchar[\my@lbibitem\my@bibitem}
\def\mybiblabel#1#2{\@biblabel{{\hyperref{http://www.ams.org/mathscinet-getitem?mr=#1}{}{}{#2}}}}
\def\myhyperanchor#1{\Hy@raisedlink{\hyper@anchorstart{cite.#1}\hyper@anchorend}}
\def\my@lbibitem[#1]#2#3#4\par{%
  \item[\mybiblabel{#2}{#1}\myhyperanchor{#3}\hfill]#4%
  \@ifundefined{ifbackrefparscan}{}{\BR@backref{#3}}%
  \if@filesw{\let\protect\noexpand\immediate% write to aux-file
    \write\@auxout{\string\bibcite{#3}{#1}}}\fi\ignorespaces%
}
\def\my@bibitem#1#2#3\par{%
  \refstepcounter\@listctr% standard tex item counter for the generic item number
  \item[\mybiblabel{#1}{\the\value\@listctr}\myhyperanchor{#2}\hfill]#3%
  \@ifundefined{ifbackrefparscan}{}{\BR@backref{#2}}%
  \if@filesw\immediate\write\@auxout% write to aux-file
    {\string\bibcite{#2}{\the\value\@listctr}}\fi\ignorespaces%
}
\DeclareFontFamily{U} {MnSymbolA}{}
\DeclareFontShape{U}{MnSymbolA}{m}{n}{
   <-6> MnSymbolA5
   <6-7> MnSymbolA6
   <7-8> MnSymbolA7
   <8-9> MnSymbolA8
   <9-10> MnSymbolA9
   <10-12> MnSymbolA10
   <12-> MnSymbolA12}{}
\DeclareFontShape{U}{MnSymbolA}{b}{n}{
   <-6> MnSymbolA-Bold5
   <6-7> MnSymbolA-Bold6
   <7-8> MnSymbolA-Bold7
   <8-9> MnSymbolA-Bold8
   <9-10> MnSymbolA-Bold9
   <10-12> MnSymbolA-Bold10
   <12-> MnSymbolA-Bold12}{}
\DeclareSymbolFont{MnSyA} {U} {MnSymbolA}{m}{n}
 \DeclareFontFamily{U} {MnSymbolC}{}
\DeclareFontShape{U}{MnSymbolC}{m}{n}{
  <-6> MnSymbolC5
  <6-7> MnSymbolC6
  <7-8> MnSymbolC7
  <8-9> MnSymbolC8
  <9-10> MnSymbolC9
  <10-12> MnSymbolC10
  <12-> MnSymbolC12}{}
\DeclareFontShape{U}{MnSymbolC}{b}{n}{
  <-6> MnSymbolC-Bold5
  <6-7> MnSymbolC-Bold6
  <7-8> MnSymbolC-Bold7
  <8-9> MnSymbolC-Bold8
  <9-10> MnSymbolC-Bold9
  <10-12> MnSymbolC-Bold10
  <12-> MnSymbolC-Bold12}{}
\DeclareSymbolFont{MnSyC} {U} {MnSymbolC}{m}{n}
\DeclareMathSymbol{\top}{\mathord}{MnSyA}{219} % smaller symbol for transpose
\DeclareMathSymbol{\plus}{\mathord}{MnSyC}{20} % a smaller plus sign
\declaretheorem[numberwithin=section]{theorem}
\declaretheorem[sibling=theorem]{proposition}
\declaretheorem[sibling=theorem,style=definition]{definition}
\declaretheorem[sibling=theorem,style=remark]{remark}
\numberwithin{equation}{section}     % Makes labeled equations easier to find.
\setlist[enumerate,1]{label={\upshape(\alph*)},ref=\alph*}
\setlist[enumerate,2]{label={\upshape(\arabic*)},ref=\arabic*}
\newcommand{\R}{\mathbb{R}}
\newcommand{\N}{\mathbb{N}}
\newcommand{\E}{\mathbb{E}}
\def\eps{\varepsilon}
\def\phi{\varphi}
\def\R{{\mathbb R}}
\def\N{{\mathbb N}}
\def\cont{{\mathcal C}}
\def\le{\leqslant}
\def\ge{\geqslant}
\def\m{\mathbb{M}}
\DeclareMathOperator{\inte}{int}
\newcommand{\vertiii}[1]{{\left\vert\kern-0.25ex\left\vert\kern-0.25ex\left\vert #1 
    \right\vert\kern-0.25ex\right\vert\kern-0.25ex\right\vert}}
\newcommand{\invertiii}[1]{{\vert\kern-0.25ex\vert\kern-0.25ex\vert #1 
    \vert\kern-0.25ex\vert\kern-0.25ex\vert}}
\begin{document}

\title{Orbits closeness for slowly mixing dynamical systems}
\date{\today}
\author[J. Rousseau]{J\'er\^ome Rousseau}
\address{J\'er\^ome Rousseau, CREC, Acad\'emie Militaire de St Cyr Co\"etquidan, 56381 GUER Cedex, France}
\address{IRMAR, CNRS UMR 6625,
Universit\'e de Rennes 1, 35042 Rennes, France}
\address{
Departamento de Matem\'atica\\
Universidade Federal da Bahia\\
Avenida Ademar de Barros s/n\\
40170-110 Salvador, BA\\
Brasil} 
\email{\href{mailto:jerome.rousseau@ufba.br}{jerome.rousseau@ufba.br}}
\urladdr{\url{http://www.sd.mat.ufba.br/~jerome.rousseau/}}

\author[M. Todd]{Mike Todd}
\address{Mike Todd\\ Mathematical Institute\\
University of St Andrews\\
North Haugh\\
St Andrews\\
KY16 9SS\\
Scotland} \email{\href{mailto:m.todd@st-andrews.ac.uk}{m.todd@st-andrews.ac.uk}}
\urladdr{\url{https://mtoddm.github.io/}}

\subjclass[2020]{37A50, 37B20, 37D25}

\thanks{We would like to thank Thomas Jordan for helpful suggestions on correlation dimension.  We also thank the referee(s) for useful comments. Both authors were partially supported by FCT projects PTDC/MAT-PUR/28177/2017 and by CMUP (UIDB/00144/2020), which is funded by FCT with national (MCTES) and European structural funds through the programs FEDER, under the partnership agreement PT2020.  JR was also partially supported by CNPq and PTDC/MAT-PUR/4048/2021, and with national funds.}

\begin{abstract}
Given a dynamical system, we prove that the shortest distance between two $n$-orbits scales like $n$ to a power even when the system has slow mixing properties, thus building and improving on results of Barros, Liao and the first author.  We also extend these results to flows.  Finally, we give an example for which the shortest distance between two orbits has no scaling limit.
\end{abstract}

\maketitle

\section{Introduction}

The study of the statistical properties of dynamical systems is one of the main pillars of ergodic theory. In particular, one of the principal lines of investigation is to try and obtain quantitative information on the long term behaviour of orbits (such as return and hitting times, dynamical extremal indices or logarithm laws). 

In a metric space $(X,d)$, the problem of the shortest distance between two orbits of a dynamical system $T:X\to X$, with an ergodic measure $\mu$, was introduced in \cite{BarLiaRou19}. That is, for $n\in \N$ and $x, y\in X$, they studied
$$\m_n(x, y)= \m_{T, n}(x, y):=\min_{0\le i,j\le n-1} d(T^i(x), T^j(y))$$
and showed that the decay of $\m_n$ depends on the correlation dimension.

The \emph{lower correlation dimension} of $\mu$ is defined by 
$$\underline{C}_\mu:=\liminf_{r\to 0} \frac{\log\int\mu(B(x, r))~d\mu(x)}{\log r},$$
and the \emph{upper correlation dimension} $\overline{C}_\mu$ is analogously defined via the limsup.  If these are equal, then this is $C_\mu$, the \emph{correlation dimension} of $\mu$. This dimension plays an important role in the description of the fractal structure of invariant sets in dynamical systems and has been widely studied from different points of view: numerical estimates (e.g. \cite{badii, vaienti-gen, SR}), existence and relations with other fractal dimension (e.g. \cite{barbaroux, Pes93}) and relations with other dynamical quantities (e.g. \cite{faranda-vaienti, mantica}).

It is worth mentioning that the problem of the shortest distance between orbits is a generalisation of the longest common substring problem for random sequences, a key feature in bioinformatics and computer science (see e.g \cite{waterman}).

In \cite[Theorem 1]{BarLiaRou19}, under the assumption $\underline{C}_\mu>0$, a general lower bound for $\m_n$ was obtained:
\begin{theorem}For a dynamical system $(X,T,\mu)$, we have
\begin{equation*}
\limsup_{n}\frac{\log \m_{T, n}(x, y)}{-\log n} \le \frac{2}{\underline{C}_\mu} \qquad \mu\times\mu\text{-a.e. } x, y.
\end{equation*}
\label{thm:BLR}
\end{theorem}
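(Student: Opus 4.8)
The plan is to establish the complementary estimate by a Borel--Cantelli argument, exploiting that $n\mapsto\m_{T,n}(x,y)$ is non-increasing in order to reduce to a lacunary subsequence. We may assume $\underline C_\mu>0$, since otherwise the right-hand side is $+\infty$ and there is nothing to prove. Fix $\eps>0$. It suffices to show that, for $\mu\times\mu$-a.e.\ $(x,y)$, one has $\m_{T,n}(x,y)\ge n^{-(2/\underline C_\mu+\eps)}$ for all large $n$: this forces $\limsup_n\log\m_{T,n}(x,y)/(-\log n)\le 2/\underline C_\mu+\eps$, and then we let $\eps\to0$.

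Put $n_k:=2^k$ and $r_k:=n_k^{-(2/\underline C_\mu+\eps)}$, and define
$$A_k:=\Bigl\{(x,y):\ \m_{T,n_k}(x,y)<r_k\Bigr\}=\bigcup_{0\le i,j\le n_k-1}\Bigl\{(x,y):\ d(T^i x,T^j y)<r_k\Bigr\}.$$
The key observation is that a single set in this union is measured by the correlation integral: since $\mu$ is $T$-invariant, the transformation $T^i\times T^j$ preserves $\mu\times\mu$, so
$$(\mu\times\mu)\Bigl(\{(x,y):\ d(T^i x,T^j y)<r_k\}\Bigr)=(\mu\times\mu)\Bigl(\{(u,v):\ d(u,v)<r_k\}\Bigr)=\int\mu\bigl(B(x,r_k)\bigr)\,d\mu(x),$$
independently of $i,j$. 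A union bound over the $n_k^2$ choices of $(i,j)$ therefore yields $(\mu\times\mu)(A_k)\le n_k^{2}\int\mu(B(x,r_k))\,d\mu(x)$.

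Now I would feed in the definition of the lower correlation dimension: given any $\delta\in(0,\underline C_\mu)$, there is $\rho>0$ such that $\int\mu(B(x,r))\,d\mu(x)\le r^{\underline C_\mu-\delta}$ for all $0<r<\rho$. Since $r_k\to0$, this applies for all large $k$, giving
$$(\mu\times\mu)(A_k)\le n_k^{2}\,r_k^{\,\underline C_\mu-\delta}=n_k^{\,2-(2/\underline C_\mu+\eps)(\underline C_\mu-\delta)}=2^{\,k\left(2\delta/\underline C_\mu-\eps\underline C_\mu+\eps\delta\right)}.$$
As $\delta\to0$ the exponent tends to $-\eps\underline C_\mu<0$, so choosing $\delta$ small enough makes $\sum_k(\mu\times\mu)(A_k)$ a convergent geometric series. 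By the Borel--Cantelli lemma, for $\mu\times\mu$-a.e.\ $(x,y)$ there is $K=K(x,y)$ with $\m_{T,n_k}(x,y)\ge r_k$ whenever $k\ge K$.

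Finally, I would interpolate to arbitrary $n$. Given $n$, pick $k$ with $n_k\le n<n_{k+1}$; since the index set defining $\m_{T,n}$ sits inside that defining $\m_{T,n_{k+1}}$, monotonicity gives $\m_{T,n}(x,y)\ge\m_{T,n_{k+1}}(x,y)\ge r_{k+1}$ for $k\ge K$. Hence $-\log\m_{T,n}(x,y)\le(2/\underline C_\mu+\eps)\log n_{k+1}$ while $\log n\ge\log n_k$, so
$$\frac{\log\m_{T,n}(x,y)}{-\log n}\le\frac{(2/\underline C_\mu+\eps)\log n_{k+1}}{\log n_k}=\Bigl(\frac{2}{\underline C_\mu}+\eps\Bigr)\frac{k+1}{k}\longrightarrow\frac{2}{\underline C_\mu}+\eps$$
as $n\to\infty$ (note $k\to\infty$), which is what we wanted. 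The one point requiring care is precisely this last manoeuvre: a direct Borel--Cantelli over all $n$ is not available, because the exponent $2\delta/\underline C_\mu-\eps\underline C_\mu+\eps\delta$ need not drop below $-1$ for small $\eps$; the passage to the geometric subsequence $n_k=2^k$, legitimised by the monotonicity of $n\mapsto\m_{T,n}$, is what saves the day, and the rest is just the union bound together with the invariance of $\mu\times\mu$ under $T^i\times T^j$.
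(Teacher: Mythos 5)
Your proof is correct and follows essentially the same route as the source: your union bound over the $n_k^2$ pairs $(i,j)$ is exactly the first-moment (Markov) estimate on the pair-correlation count used in the paper's proof of the flow analogue (Theorem~\ref{thm:limsupflow}) and in the cited original, followed by Borel--Cantelli along a lacunary subsequence and the monotonicity of $n\mapsto\m_{T,n}$. The only differences are cosmetic: a dyadic subsequence with the radius tied directly to the target exponent, versus the paper's $t_\ell=\lceil e^{\ell^2}\rceil$ subsequence and a radius carrying a logarithmic correction.
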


To replace the inequality above with equality, in  \cite[Theorems 3 and 6]{BarLiaRou19}, the authors assumed that $C_\mu$ exists and proved
\begin{equation}
\liminf_{n}\frac{\log \m_{T,n}(x, y)}{-\log n} \ge \frac{2}{C_\mu} \qquad \mu\times\mu\text{-a.e. } x, y,\label{eq:WTS}
\end{equation}
using some exponential mixing conditions on the system. 

One could naturally wonder if this mixing condition could be relaxed or even dropped. In \cite{BarLiaRou19}, a partial answer was given and it was proved that for irrational rotation (which are not mixing) the inequality in Theorem~\ref{thm:BLR} could be strict.

In this paper, we extend the above results \eqref{eq:WTS} to discrete systems with no requirement on mixing conditions. The main tool in proving our positive results is inducing: the idea in the discrete case is to first take advantage of the fact that Theorem~\ref{thm:BLR} holds in great generality (including, as we note later, to higher-dimensional hyperbolic cases) and then to show that if there is an induced version of the system satisfying \eqref{eq:WTS} then this inequality passes to the original system. 

 Moreover, we also extend the results of \cite{BarLiaRou19} to flows. Thus we first have to prove an analogue of Theorem~\ref{thm:BLR} and observe that in the continuous setting, the correct scaling is $C_\mu-1$. Then, using inducing via Poincar\'e sections, we also obtain an analogue of \eqref{eq:WTS}.
 
 We will give examples for all of these results both in the discrete and continuous setting. We also give a class of examples in Section~\ref{sec:skew} where the conclusions of \cite{BarLiaRou19} fail to hold.  This class is slowly mixing and also does not admit an induced version

 Finally, we emphasise that one of the obstacles to even wider application is proving that the correlation dimension $C_\mu$ exists, see Section~\ref{ssec:cordim} for some discussion and results. For suspension flows, under some natural assumptions, we will show in Section~\ref{sec:suspflow}, that if the correlation dimension of the base exists then the correlation dimension of the invariant measure of the flow also exists.

\section{Main results and proofs for orbits closeness in the discrete case}

\subsection{The main theorem in the nonuniformly expanding case}

We will suppose that given $(X, T, \mu)$, there is a subset $Y=\overline{\bigcup_iY_i}\subset X$ and an \emph{inducing time} $\tau:Y\to \N\cup\{\infty\}$, constant on each $Y_i$, and denoted $\tau_i$ and so that our induced map is $F=T^\tau: Y\to Y$.  We suppose that there is an $F$-invariant probability measure $\mu_F$ with $\int\tau~d\mu_F<\infty$ and which  \emph{projects} to $\mu$ by the following rule:
\begin{equation}\mu(A)=\frac{1}{\int\tau~d\mu_F} \sum_i\sum_{k=0}^{\tau_i-1}\mu_F(Y_i\cap T^{-k}(A)).
\label{eq:indmeas}
\end{equation}
We call $(Y, F, \mu_F)$ an \emph{inducing scheme}, or an \emph{induced system}  for $(X, T, \mu)$.  For systems which admit an inducing scheme, we have our main theorem:

\begin{theorem}Assume that the inducing scheme $(Y, F, \mu_F)$ satisfies \eqref{eq:WTS} and that $C_{\mu_F}=C_{\mu}$. Then
$$
\lim_{n\to \infty}\frac{\log \m_{T, n}(x, y)}{-\log n} = \frac{2}{C_{\mu}} \qquad \mu\times\mu\text{-a.e. } x, y.$$
\label{thm:induced_exp}
\end{theorem}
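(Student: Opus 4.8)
The plan is to relate $\m_{T,n}(x,y)$ to $\m_{F,m}(x,y)$ for an appropriate $m$ and then import the hypothesis \eqref{eq:WTS} for the induced system $(Y,F,\mu_F)$, while using Theorem~\ref{thm:BLR} applied to $(X,T,\mu)$ for the complementary inequality. First, I would dispose of the upper bound: since $\underline C_\mu = C_\mu$, Theorem~\ref{thm:BLR} gives
$$\limsup_n \frac{\log \m_{T,n}(x,y)}{-\log n}\le \frac{2}{C_\mu}\qquad \mu\times\mu\text{-a.e.}$$
directly, with no extra work. So the entire content is the matching lower bound
$$\liminf_n \frac{\log\m_{T,n}(x,y)}{-\log n}\ge \frac{2}{C_\mu}.$$

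For the lower bound, the key observation is that an orbit segment of length $n$ under $T$ contains a nested orbit segment under $F=T^\tau$: if $x\in Y$ and we let $\tau^k(x)=\tau(x)+\tau(F x)+\cdots+\tau(F^{k-1}x)$ be the $k$-th return time, then $\{T^i x : 0\le i\le n-1\}\supseteq\{F^j x : 0\le j\le k_n(x)-1\}$ where $k_n(x)=\max\{k:\tau^k(x)\le n-1\}$ (interpreting $F^j x = T^{\tau^j(x)}x$). Hence $\m_{T,n}(x,y)\le \m_{F,k_n(x)\wedge k_n(y)}(x,y)$ whenever $x,y\in Y$, and so
$$\frac{\log\m_{T,n}(x,y)}{-\log n}\ge \frac{\log\m_{F,k_n(x)\wedge k_n(y)}(x,y)}{-\log n}.$$
By Kac's formula / Birkhoff applied to $\tau$, for $\mu_F$-a.e.\ $x$ we have $\tau^k(x)/k\to \int\tau\,d\mu_F=:\bar\tau<\infty$, hence $k_n(x)/n\to 1/\bar\tau$, so $k_n(x)\wedge k_n(y)\sim n/\bar\tau\to\infty$ and $\log k_n \sim \log n$. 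Writing $m=k_n(x)\wedge k_n(y)$, the right-hand side becomes $\frac{\log m}{\log n}\cdot\frac{\log\m_{F,m}(x,y)}{-\log m}$, the first factor tending to $1$. Taking $\liminf$ and using \eqref{eq:WTS} for the induced system, which gives $\liminf_m \frac{\log\m_{F,m}(x,y)}{-\log m}\ge \frac{2}{C_{\mu_F}}=\frac{2}{C_\mu}$ (here the hypothesis $C_{\mu_F}=C_\mu$ enters), yields the desired bound for $\mu_F\times\mu_F$-a.e.\ $(x,y)$.

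The remaining point is to upgrade this from $\mu_F\times\mu_F$-a.e.\ to $\mu\times\mu$-a.e. This is where I expect the main technical obstacle to lie. From \eqref{eq:indmeas}, $\mu$-a.e.\ point $x$ has some iterate $T^k x\in Y$ with $k<\tau_i$ for the relevant branch, and moreover $T^k x$ lands in a set of full $\mu_F$-measure (one checks $\mu_F\ll$ the relevant push-forward, so null sets for $\mu_F$ pull back appropriately). The quantity $\m_{T,n}$ is essentially insensitive to replacing $x$ by a fixed finite iterate $T^k x$: indeed $\m_{T,n}(x,y)$ and $\m_{T,n-k-\ell}(T^k x, T^\ell y)$ differ by a controlled amount, and shifting the orbit start by a bounded number of steps changes $\log n$ normalisation negligibly. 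So one picks, for $\mu\times\mu$-a.e.\ $(x,y)$, iterates $(T^{k}x, T^{\ell}y)$ lying in a full-$\mu_F\times\mu_F$ set, applies the bound there, and transfers back. One must be slightly careful that the exceptional null set for the induced statement, when saturated under the (countably many, finite-time) maps $T^k|_{Y_i}$, remains $\mu$-null — this follows from \eqref{eq:indmeas} since each term $\mu_F(Y_i\cap T^{-k}(\cdot))$ is absolutely continuous with respect to $\mu_F$ composed with $T^k$, and a countable union of $\mu$-null sets is $\mu$-null. Combining the two inequalities gives the stated limit $\mu\times\mu$-a.e.
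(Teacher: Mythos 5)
Your proposal is correct and follows essentially the same route as the paper's proof: the sub-orbit inclusion $\{F^j z\}\subset\{T^i z\}$ combined with Birkhoff's theorem for $\tau$ to compare the time scales (you via the counting function $k_n$, the paper via the sets $U_{\eps,N}$ where $|\tau_n-n\bar\tau|\le\eps n$), the hypothesis \eqref{eq:WTS} on the induced system for the lower bound, Theorem~\ref{thm:BLR} for the upper bound, and a transfer of the a.e.\ statement from $\mu_F\times\mu_F$ to $\mu\times\mu$ using \eqref{eq:indmeas} and the fact that typical points enter $Y$ after finitely many steps. No gaps of substance.
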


In Section~\ref{sec:skew} we give an example of a class of mixing systems where the conclusion of this theorem fails.  These systems do not have good inducing schemes, see Remark~\ref{rmk:noind} below.

\begin{remark}
As can be seen from the proof of this theorem, as well as related results in this paper, in fact what we prove is that if there is an induced system satisfying 
$$\liminf_{n}\frac{\log \m_{F,n}(x, y)}{-\log n} \ge \frac{2}{\overline{C}_{\mu_F}} \qquad \mu_F\times\mu_F\text{-a.e. } x, y,$$
then 
$$\liminf_{n\to \infty}\frac{\log \m_{T, n}(x, y)}{-\log n} \ge \frac{2}{\overline{C}_{\mu_F}} \qquad \mu\times\mu\text{-a.e. } x, y,$$
with the analogous statements for flows in Section~\ref{sec:flows}.
\end{remark}

In Section~\ref{sec:disegs}  we will give examples of systems where $\{Y_i\}_i$ is countable, $\mu$ and $\mu_F$ are absolutely continuous with respect to Lebesgue and $C_{\mu_F}=C_{\mu}$.

\begin{proof}[Proof of Theorem~\ref{thm:induced_exp}]
The main observation here is that it is sufficient to prove that $\lim_{k\to \infty} \frac{\log \m_{T,n_k}\left(x, y\right)}{-\log n_k} \ge   \frac{2}{C_{\mu}}$ along a subsequence $(n_k)_k$ which scales linearly with $k$.

For $x\in Y$, define $\tau_n(x):=\sum_{k=0}^{n-1} \tau(F^k(x))$. 
Given $\eps>0$ and $N\in \N$, set
$$U_{\eps, N}:=\left\{x\in Y:|\tau_n(x)-n\bar\tau|\le \eps n\text{ for all } n\ge N\right\}.$$
These are nested sets and by Birkhoff's Ergodic Theorem, we have $\lim_{N\to \infty}\mu_F(U_{\eps, N})=1$.  In particular, by \eqref{eq:indmeas}, $\mu(U_{\eps, N})>0$ for $N$ sufficiently large and hence $$\lim_{n\to \infty}\mu\left(\bigcup_{i=0}^{\lfloor \eps N\rfloor}T^{-i}(U_{\eps, N})\right)=1.$$
So for $\mu\times\mu$-typical $(x, y)\in X\times X$, there is $N\in \N$ such that $x, y\in  \bigcup_{i=0}^{\lfloor \eps N\rfloor}T^{-i}(U_{\eps, N})$.  Set $i, j\le \lfloor \eps N\rfloor$ minimal such that $T^i(x), T^{j}(y)\in U_{\eps, N}\subset Y$.  Then   \cite[Theorem 3]{BarLiaRou19} implies that for any $\eta>0$ and sufficiently large $n$,
$$\frac{\log \m_{F,n}(T^i(x), T^{j}(y))}{-\log n} \ge \frac{2}{C_{\mu_F}}-\eta=\frac{2}{C_{\mu}}-\eta.$$
Putting together the facts that the $n$-orbit by $F$ of $T^i(x)$ (respectively $T^{j}(y)$) is a subset of the $\tau_n(x)$- (respectively $\tau_n(y)$-) orbit by $T$ of $T^i(x)$ (respectively $T^{j}(y)$) and that $i, j, |\tau_n(T^i(x))-n\bar\tau|, |\tau_n(T^{j}(y))-n\bar\tau| \le n\eps$ for $n\ge N$, we obtain
$$\m_{T,n\lceil \bar\tau+2\eps\rceil}\left(x, y\right)\leq  \m_{T,n\lceil \bar\tau+\eps\rceil}\left(T^i(x), T^{j}(y)\right)\leq \m_{F,n}(T^i(x), T^{j}(y))$$
and thus
$$ \frac{\log \m_{T,n\lceil \bar\tau+2\eps\rceil}\left(x, y\right)}{-\log n} \ge \frac{2}{C_{\mu}}-\eta.$$
Observing that $\lim_{n\to \infty}\frac{\log n\lceil \bar\tau+2\eps\rceil}{\log n}=1$ and taking limit in the previous equation we deduce that $\lim_{n\to \infty}\frac{\log \m_{T,n}\left(x, y\right)}{-\log n} \ge   \frac{2}{C_{\mu}}-\eta$.  Since $\eta$ can be choose arbitrary small, the theorem is proved.
\end{proof}

\subsection{The main theorem in the nonuniformly hyperbolic case}

We next consider systems $T:X\to X$ with invariant measure $\mu$ which are nonuniformly hyperbolic in the sense of Young, see \cite{You98}.  Then there is some $Y\subset X$ and an inducing time $\tau$ defining $F=T^\tau:Y\to Y$, with measure $\mu_F$, which is uniformly expanding modulo uniformly contracting directions.  We can quotient out these contracting directions to obtain a system $\bar F: \bar Y\to \bar Y$, which has an invariant measure $\mu_{\bar F}$.

\begin{theorem}
Assume that the induced system $(\bar Y, \bar F, \mu_{\bar F})$ satisfies \eqref{eq:WTS} and that $C_{\mu}=C_{\mu_F}$. Then
$$
\lim_{n\to \infty}\frac{\log \m_{T, n}(x, y)}{-\log n} = \frac{2}{C_{\mu}} \qquad \mu\times\mu\text{-a.e. } x, y.$$
\label{thm:induced_hyp}
\end{theorem}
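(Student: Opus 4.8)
The plan is to deduce Theorem~\ref{thm:induced_hyp} from Theorem~\ref{thm:induced_exp} by interposing the quotient system $(\bar Y,\bar F,\mu_{\bar F})$. Concretely, I would (i) transfer the bound \eqref{eq:WTS} from $(\bar Y,\bar F,\mu_{\bar F})$ up to the induced hyperbolic map $(Y,F,\mu_F)$, then (ii) transfer it from $(Y,F,\mu_F)$ down to $(X,T,\mu)$ by repeating the inducing argument of the proof of Theorem~\ref{thm:induced_exp} verbatim, and finally (iii) combine it with the general upper bound of Theorem~\ref{thm:BLR}. Only step (i) requires a new idea.

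For step (i), let $\pi\colon Y\to\bar Y$ be the projection collapsing local stable leaves, so that $\pi\circ F=\bar F\circ\pi$ and $\mu_{\bar F}=\pi_*\mu_F$. The structural input I would invoke is that, for the metric on $\bar Y$ relative to which $(\bar Y,\bar F,\mu_{\bar F})$ is assumed to satisfy \eqref{eq:WTS} --- the symbolic metric, or equivalently the metric induced by the stable holonomy onto a reference unstable leaf --- the map $\pi$ is H\"older: there are $C\ge1$ and $\theta\in(0,1]$ with $d_{\bar Y}(\pi p,\pi q)\le C\,d(p,q)^{\theta}$ for all $p,q\in Y$. This has two consequences. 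First, $d(F^{i}x,F^{j}y)\ge\bigl(C^{-1}d_{\bar Y}(\bar F^{i}\bar x,\bar F^{j}\bar y)\bigr)^{1/\theta}$ for every $i,j$, so minimising over $0\le i,j\le n-1$ gives $\m_{F,n}(x,y)\ge\bigl(C^{-1}\m_{\bar F,n}(\bar x,\bar y)\bigr)^{1/\theta}$; since $(\pi\times\pi)_{*}(\mu_F\times\mu_F)=\mu_{\bar F}\times\mu_{\bar F}$, for $\mu_F\times\mu_F$-a.e.\ $(x,y)$ the pair $(\bar x,\bar y)$ is $\mu_{\bar F}\times\mu_{\bar F}$-typical, whence the hypothesis on $\bar F$ (in the $\overline{C}$-form recorded in the remark following Theorem~\ref{thm:induced_exp}) yields $\liminf_{n}\frac{\log\m_{F,n}(x,y)}{-\log n}\ge\frac{1}{\theta}\cdot\frac{2}{\overline{C}_{\mu_{\bar F}}}$. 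Second, the same H\"older bound gives the inclusion $B_{Y}(p,r)\subseteq\pi^{-1}\bigl(B_{\bar Y}(\pi p,Cr^{\theta})\bigr)$, hence $\int\mu_F(B(x,r))\,d\mu_F(x)\le\int\mu_{\bar F}(B(\bar x,Cr^{\theta}))\,d\mu_{\bar F}(\bar x)$, and therefore $\overline{C}_{\mu_F}\ge\theta\,\overline{C}_{\mu_{\bar F}}$. Combining the two, $\frac{1}{\theta}\cdot\frac{2}{\overline{C}_{\mu_{\bar F}}}\ge\frac{2}{\overline{C}_{\mu_F}}$, so the H\"older exponent drops out and $(Y,F,\mu_F)$ satisfies the $\overline{C}$-form of \eqref{eq:WTS}.

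For step (ii), I would run the proof of Theorem~\ref{thm:induced_exp} with the inducing scheme $(Y,F,\mu_F)$ unchanged: the nested sets $U_{\eps,N}$, Birkhoff's theorem applied to $\tau$, and the sandwiching of $n$-orbits of $F$ inside $\tau_n$-orbits of $T$ give $\liminf_{n}\frac{\log\m_{T,n}(x,y)}{-\log n}\ge\frac{2}{\overline{C}_{\mu_F}}$ for $\mu\times\mu$-a.e.\ $(x,y)$. By hypothesis $C_\mu=C_{\mu_F}$ exists, so $\overline{C}_{\mu_F}=C_\mu$ and the bound becomes $\ge\frac{2}{C_\mu}$; Theorem~\ref{thm:BLR} supplies the matching $\limsup_{n}\frac{\log\m_{T,n}(x,y)}{-\log n}\le\frac{2}{\underline{C}_\mu}=\frac{2}{C_\mu}$, which yields the stated limit.

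The main obstacle lies entirely in step (i): one must fix precisely which metric on $\bar Y$ underlies the hypothesis \eqref{eq:WTS} there and verify that collapsing the contracting directions is H\"older for it, globally on $Y$ --- essentially the standard H\"older regularity of stable holonomies in Young's framework, but it must be arranged so that the resulting exponent cancels, as above, against the comparison of correlation integrals. I expect this to be technical but routine. As stressed in the introduction, the genuinely restrictive standing assumption is $C_\mu=C_{\mu_F}$; its verification in concrete classes of examples is the subject of Sections~\ref{ssec:cordim} and~\ref{sec:disegs}.
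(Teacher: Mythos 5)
Your steps (ii) and (iii) coincide with what the paper does (its entire proof is the remark that the argument is ``directly analogous'' to that of Theorem~\ref{thm:induced_exp}), but step (i) --- the only place where you introduce a new idea --- has its key inequality pointing the wrong way, and this is a genuine gap. Recall that \eqref{eq:WTS} asserts that $\m_n$ is \emph{small}: $\liminf_n\frac{\log\m_n}{-\log n}\ge\frac{2}{C}$ means $\m_n\le n^{-2/C+o(1)}$ eventually. Your H\"older bound $d_{\bar Y}(\pi p,\pi q)\le C\,d(p,q)^{\theta}$ gives $\m_{F,n}(x,y)\ge\bigl(C^{-1}\m_{\bar F,n}(\bar x,\bar y)\bigr)^{1/\theta}$, a \emph{lower} bound on $\m_{F,n}$; taking logarithms (both sides negative) and dividing by $-\log n<0$ reverses the inequality, so what actually follows is
\[
\liminf_n\frac{\log\m_{F,n}(x,y)}{-\log n}\;\le\;\frac{1}{\theta}\,\liminf_n\frac{\log\m_{\bar F,n}(\bar x,\bar y)}{-\log n},
\]
an \emph{upper} bound on the quantity you must bound from below; the hypothesis on $\bar F$ then gives no information. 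This is not just a bookkeeping slip: the projection $\pi$ discards the stable coordinate, so closeness of the quotient orbits does not by itself produce closeness of the genuine orbits. A concrete test case is the baker's map $F$ on $[0,1]^2$ with Lebesgue measure, whose stable quotient is the doubling map $\bar F$ on $[0,1]$: there $\theta=1$, $\m_{\bar F,n}\approx n^{-2}$ but $\m_{F,n}\approx n^{-1}$, so the conclusion of your step (i) (which would read $\liminf_n\frac{\log\m_{F,n}}{-\log n}\ge 2$) is false. (Your inclusion $B_Y(p,r)\subseteq\pi^{-1}(B_{\bar Y}(\pi p,Cr^{\theta}))$ and the consequence $\overline{C}_{\mu_F}\ge\theta\,\overline{C}_{\mu_{\bar F}}$ are correct, but they transfer information in the direction relevant to the limsup bound of Theorem~\ref{thm:BLR}, not to \eqref{eq:WTS}.)

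What step (i) actually requires is the hyperbolicity itself: if $d_{\bar Y}(\bar F^{i}\bar x,\bar F^{j}\bar y)<r$, then $F^{i}x$ and $F^{j}y$ lie on stable leaves that are $r$-close, and one must iterate further so that the uniform contraction along stable leaves under $F$ shrinks the stable separation to order $r$, while controlling the concomitant growth in the unstable direction and the extra time spent; only then does a close encounter of the $\bar F$-orbits yield a close encounter of the $T$-orbits at a comparable time, which is what the sandwich $\m_{T,\cdot}(x,y)\le\m_{F,\cdot}(\cdot,\cdot)$ in the proof of Theorem~\ref{thm:induced_exp} needs. One must also reconcile the exponents: the hypothesis is phrased with $C_{\mu_{\bar F}}$ (typically strictly smaller than $C_{\mu_F}$, as in the baker's example) while the conclusion involves $C_{\mu}=C_{\mu_F}$, so the loss incurred in passing from the quotient to the full system has to be quantified against this change of dimension. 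This contraction-and-bookkeeping argument is the substantive content hidden behind the paper's ``directly analogous'', and the H\"older regularity of $\pi$ cannot substitute for it.
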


The proof is directly analogous to that of Theorem~\ref{thm:induced_exp}.

\subsection{Requirements on the induced system}

In  \cite{BarLiaRou19}, the main requirement for \eqref{eq:WTS} to hold is that the system has some Banach space $\cont$ of functions from $X$ to $\R$, $\theta\in (0, 1)$ and $C_1\geq0$ such that for all $\phi, \psi\in \cont$ and $n\in\N$,
$$\left|\int \psi\cdot\phi\circ F^n~d\mu_F-\int\psi~d\mu_F\int\phi~d\mu_F\right|\le C_1\|\phi\|_{\cont} \|\psi\|_{\cont}\theta^n.$$
Some regularity conditions on the norms of characteristics on balls and the measures were also required, as well as a topological condition on our metric space (always satisfied for subset of $\R^n$ with the Euclidean metric and subset of a Riemannian manifold of bounded curvature), but we leave the details to  \cite{BarLiaRou19}. We can also remark that for Lipschitz maps on a compact metric space with $\cont = Lip$, these regularity conditions can be dropped \cite{GouRStad}.

 In \cite[Theorem 3]{BarLiaRou19} the main application was to systems where $\cont = BV$, so for example if we have a Rychlik interval map, and in Theorem 6 the main application was to H\"older observables, so that the induced system is Gibbs-Markov, see for example \cite[Section 3]{Alv20}.

\section{Examples in the discrete setting}
\label{sec:disegs}
Examples of our theory require an inducing scheme and, ideally, well-understood correlation dimensions.   In \cite{PW} correlation dimension is dealt with in the Gibbs-Markov setting in the case $\{Y_i\}_i$ is a finite collection of sets, but under inducing we usually expect this collection to be infinite (in which case much less is known), so this is not directly relevant here.

The simplest case in the context of our results is when the invariant probability measure $\mu$ for the system is $d$-dimensional Lebesgue, or is absolutely continuous with respect to Lebesgue (an \emph{acip}) with a regular density, since in these cases the correlation dimension for both $\mu$ and the corresponding measure for the system is $d$. 

\subsection{Existence of the correlation dimension}
\label{ssec:cordim}
First of all, we will give a result which implies that the correlation dimension for regular acips exists.
\begin{proposition} Let $X\subset \R^d$. If $\mu$ is a probability measure on $X$ which is absolutely continuous with respect to the $d$-dimensional Lebesgue measure such that its density $\rho$ is in $L^2$, then
\[C_\mu=d.\]
\end{proposition}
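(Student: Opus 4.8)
The plan is to evaluate the \emph{correlation integral} $I(r):=\int\mu(B(x,r))\,d\mu(x)$ asymptotically as $r\to 0$ and to show that it is comparable to $r^d$, from which $\log I(r)/\log r\to d$ follows at once. Extending $\rho$ by zero we regard $\rho\in L^2(\R^d)$; since $\mu$ is a probability measure we also have $\rho\in L^1(\R^d)$ with $\int\rho=1$, and (balls being taken in $\R^d$, which is harmless as $\mu$ is carried by $X$) we may write $\mu(B(x,r))=\int_{\{|x-y|<r\}}\rho(y)\,dy$. Then by Tonelli's theorem, after the substitution $y=x-z$,
\[
I(r)=\int_{\R^d}\rho(x)\int_{\{|x-y|<r\}}\rho(y)\,dy\,dx=\int_{\{|z|<r\}}h(z)\,dz,\qquad h(z):=\int_{\R^d}\rho(x)\rho(x-z)\,dx .
\]
So the whole question reduces to understanding the autocorrelation function $h$ near the origin.

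The key step, and the only place the hypothesis $\rho\in L^2$ is used, is that $h$ is continuous on $\R^d$ with $h(0)=\|\rho\|_2^2>0$. Indeed, writing $\tau_z\rho(x):=\rho(x-z)$ we have $h(z)=\langle\rho,\tau_z\rho\rangle_{L^2}$; since translation is continuous on $L^2(\R^d)$, i.e.\ $\|\tau_z\rho-\rho\|_2\to0$ as $z\to0$, the Cauchy--Schwarz inequality gives $|h(z)-h(z')|\le\|\rho\|_2\,\|\tau_{z-z'}\rho-\rho\|_2\to0$, so $h$ is (uniformly) continuous and bounded by $\|\rho\|_2^2$. Moreover $h(0)=\int\rho^2=\|\rho\|_2^2$, which is strictly positive because $\int\rho=1$ rules out $\rho\equiv0$.

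Writing $\omega_d:=\mathrm{Leb}(B(0,1))$, the identity above becomes
\[
\frac{I(r)}{\omega_d r^d}=\frac{1}{\mathrm{Leb}(B(0,r))}\int_{B(0,r)}h(z)\,dz\longrightarrow h(0)=\|\rho\|_2^2\qquad(r\to0^+),
\]
since the right-hand side is the average of the continuous function $h$ over balls shrinking to $0$. Hence $I(r)=\bigl(\omega_d\|\rho\|_2^2+o(1)\bigr)r^d$, so $\log I(r)=d\log r+\log\bigl(\omega_d\|\rho\|_2^2\bigr)+o(1)$; dividing by $\log r$ and using $\log r\to-\infty$ yields $\lim_{r\to0}\log I(r)/\log r=d$. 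In particular $\underline C_\mu=\overline C_\mu=C_\mu=d$.

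I do not expect a serious obstacle; the points needing care are the sign of $\log r$ when transferring the asymptotics of $I(r)$ to the ratio, and the genuine role of $L^2$ (for a merely $L^1$ density $h$ need not be continuous at $0$ and the conclusion can fail, consistent with the non-regular behaviour exhibited later in the paper). If one prefers to avoid quoting continuity of translation, the upper bound $I(r)\le\omega_d\|\rho\|_2^2\,r^d$ is immediate from $\rho(x)\rho(y)\le\tfrac12(\rho(x)^2+\rho(y)^2)$ together with symmetry, while for the matching lower bound one uses the pointwise estimate $\mathbbm 1_{B(0,r)}\ge\bigl(\omega_d(r/2)^d\bigr)^{-1}\mathbbm 1_{B(0,r/2)}\ast\mathbbm 1_{B(0,r/2)}$ to obtain $I(r)\ge\bigl(\omega_d(r/2)^d\bigr)^{-1}\int\mu(B(z,r/2))^2\,dz$, and then invokes the Lebesgue differentiation theorem and Fatou's lemma to see that $\int\mu(B(z,r/2))^2\,dz\ge\bigl(\tfrac12+o(1)\bigr)\bigl(\omega_d(r/2)^d\bigr)^2\|\rho\|_2^2$ as $r\to0$.
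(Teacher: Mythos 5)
Your proof is correct, and it takes a genuinely different route from the paper. The paper splits the statement into two one-sided bounds: it cites Fan--Lau--Rao for $\overline{C}_\mu\le d$, and for $\underline{C}_\mu\ge d$ it bounds $\mu(B(x,r))\le M\rho(x)\cdot\mathrm{Leb}(B(x,r))$ with the Hardy--Littlewood maximal function and then applies Cauchy--Schwarz together with the maximal inequality $\|M\rho\|_2\le c_1\|\rho\|_2$ to get $\int\mu(B(x,r))\,d\mu(x)\le Kr^d$. You instead identify $\int\mu(B(x,r))\,d\mu(x)=\int_{B(0,r)}h(z)\,dz$ with $h$ the $L^2$-autocorrelation of $\rho$, and use continuity of translation in $L^2$ to conclude the sharp asymptotic $\int\mu(B(x,r))\,d\mu(x)=(\omega_d\|\rho\|_2^2+o(1))r^d$. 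This is stronger than what the paper establishes (it pins down the constant, not just the exponent), it is entirely self-contained (no external citation for the $\overline{C}_\mu\le d$ direction, which here comes for free from $h(0)=\|\rho\|_2^2>0$), and it isolates exactly where $L^2$ enters. Your fallback arguments are also sound: the symmetrization $\rho(x)\rho(y)\le\tfrac12(\rho(x)^2+\rho(y)^2)$ gives the upper bound $\omega_d\|\rho\|_2^2r^d$ more cheaply than the maximal-function route, and the convolution minorant $\mathbbm{1}_{B(0,r)}\ge(\omega_d(r/2)^d)^{-1}\mathbbm{1}_{B(0,r/2)}\ast\mathbbm{1}_{B(0,r/2)}$ combined with Lebesgue differentiation and Fatou correctly yields the matching lower bound $\int\mu(B(x,r))\,d\mu(x)\gtrsim r^d$. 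The only (harmless) points to be explicit about are the ones you already flag: extending $\rho$ by zero so that balls may be taken in $\R^d$, and $\|\rho\|_2^2>0$ because $\int\rho=1$.
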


\begin{proof}
The fact that $\overline{C}_\mu\le d$ follows, for example, from \cite[Theorem 1.4]{FanLauRao02}. 

To prove a lower bound, we start by defining the Hardy-Littlewood Maximal Function (see eg \cite[Chapter 2.4]{SteSha11}) of $\rho$:
$$M\rho(x) = \sup_{r>0} \frac1{Leb(B(x, r))}\int_{B(x, r)}\rho(x)~dx.$$
Moreover, by Hardy-Littlewood maximal inequality, $M\rho\in L^2$ and there exists $c_1>0$ (depending only on $d$) such that
\[\|M\rho\|_2\leq c_1 \|\rho\|_2.\]
Thus, using the Cauchy-Schwarz inequality, we have
\begin{eqnarray*}
\int \mu(B(x, r))~d\mu(x)&\le& \int M\rho(x)\cdot Leb(B(x,r))\cdot\rho(x)~dx \\
&\le& Leb(B(x,r))\left(\int\rho^2~dx\right)^{\frac12} \left(\int(M\rho)^2~dx\right)^{\frac12}\le Kr^d
\end{eqnarray*}
for some $K>0$.  
Hence $\underline{C}_\mu\ge d$ and thus $C_\mu=d$.
\end{proof}

If the density of the acip is not sufficiently regular, the correlation dimension may differ from the correlation dimension of the Lebesgue measure, as in the following case.
\begin{proposition}
Let $\alpha\in (1/2, 1)$. Assume that $\mu$ is supported on $[0, 1]$ and $d\mu= \rho dx$ with
$\rho(x) = x^{-\alpha}$.  Then, we have 
$$C_\mu = 2(1-\alpha).$$
\label{prop:nonreg}
\end{proposition}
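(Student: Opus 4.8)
The plan is to estimate the correlation integral $I(r) := \int_0^1 \mu(B(x,r))\,d\mu(x)$ directly, splitting the outer variable $x$ according to whether it lies near the singularity at $0$ or away from it, and to extract the exponent $2(1-\alpha)$ from the regime $x \lesssim r$, which dominates. First I would record the elementary fact that $\mu([0,t]) = \int_0^t s^{-\alpha}\,ds = \frac{t^{1-\alpha}}{1-\alpha}$, so in particular $\mu([0, Cr]) \asymp r^{1-\alpha}$, and for $x > 0$ the ball $B(x,r)$ has $\mu$-mass at most $\min\{\mu([0,1]), \text{(local density)}\cdot 2r\} \lesssim \min\{1,\, x^{-\alpha} r\}$ when $x > r$, and $\mu(B(x,r)) \le \mu([0, x+r]) \asymp (x+r)^{1-\alpha} \lesssim r^{1-\alpha}$ when $x \le r$.

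For the \emph{upper bound} $\overline{C}_\mu \le 2(1-\alpha)$: I would write $I(r) = \int_0^{2r} \mu(B(x,r))\,d\mu(x) + \int_{2r}^1 \mu(B(x,r))\,d\mu(x)$. On the first piece, $\mu(B(x,r)) \lesssim r^{1-\alpha}$ and $\mu([0,2r]) \asymp r^{1-\alpha}$, so that contribution is $\lesssim r^{2(1-\alpha)}$. On the second piece, $\mu(B(x,r)) \lesssim x^{-\alpha} r$ and $d\mu = x^{-\alpha}\,dx$, so the integral is $\lesssim r \int_{2r}^1 x^{-2\alpha}\,dx$; since $\alpha \in (1/2,1)$ we have $2\alpha > 1$, hence $\int_{2r}^1 x^{-2\alpha}\,dx \asymp r^{1-2\alpha}$, giving again $\lesssim r \cdot r^{1-2\alpha} = r^{2(1-\alpha)}$. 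Adding, $I(r) \lesssim r^{2(1-\alpha)}$, so $\underline{C}_\mu \ge 2(1-\alpha)$ — wait, this gives the lower bound on the dimension; let me restate: $I(r) \lesssim r^{2(1-\alpha)}$ yields $\liminf_{r\to 0}\frac{\log I(r)}{\log r} \ge 2(1-\alpha)$, i.e. $\underline{C}_\mu \ge 2(1-\alpha)$.

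For the reverse inequality $\overline{C}_\mu \le 2(1-\alpha)$, I would produce a matching lower bound $I(r) \gtrsim r^{2(1-\alpha)}$: restrict the outer integral to $x \in (0, r)$, where $B(x,r) \supset [0, r)$, hence $\mu(B(x,r)) \ge \mu([0,r)) \asymp r^{1-\alpha}$; integrating over $x \in (0,r)$ against $d\mu$ contributes $\mu((0,r)) \asymp r^{1-\alpha}$, so $I(r) \gtrsim r^{1-\alpha}\cdot r^{1-\alpha} = r^{2(1-\alpha)}$. Combining the two bounds gives $\lim_{r\to 0}\frac{\log I(r)}{\log r} = 2(1-\alpha)$, i.e. $C_\mu = 2(1-\alpha)$, as claimed.

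The computation is essentially routine; the only place demanding care is the bookkeeping of the constants and the case split threshold (any fixed multiple of $r$ works), and making sure the condition $\alpha > 1/2$ is used precisely where it is needed, namely to guarantee that the ``far'' contribution $r\int_r^1 x^{-2\alpha}\,dx$ is of order $r^{2(1-\alpha)}$ rather than order $r$ (which would happen if $2\alpha < 1$, i.e. $\alpha < 1/2$, in which case one would instead get $C_\mu = 1 = d$, consistent with the previous proposition since then $\rho \in L^2$). The condition $\alpha < 1$ is of course needed merely for $\mu$ to be a finite measure. I do not expect any genuine obstacle here; the main subtlety is simply ensuring the lower and upper bounds on $I(r)$ have matching exponents so that the $\liminf$ and $\limsup$ defining $C_\mu$ coincide.
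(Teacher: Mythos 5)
Your proposal is correct and follows essentially the same route as the paper: the same split of the correlation integral at $x\asymp r$, the same use of $2\alpha>1$ to control the far region (you integrate $x^{-2\alpha}$ directly where the paper sums over blocks $[nr,(n+1)r]$, which is the same estimate), and the same restriction to $x\in(0,r)$ for the matching lower bound. The only blemish is the mislabelled heading of your first step (an upper bound on $I(r)$ gives the \emph{lower} bound $\underline{C}_\mu\ge 2(1-\alpha)$, as you yourself correct mid-paragraph).
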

\begin{proof}
We write $\int \mu(B(x, r))~d\mu(x) = \int_0^{2r} \mu(B(x, r))~d\mu(x)+ \int_{2r}^1 \mu(B(x, r))~d\mu(x)$.   We estimate the first term from above by
$$\int_0^{2r}\mu((0, 3r))~d\mu(x)= \mu((0, 2r))\mu((0, 3r))\asymp r^{2(1-\alpha)}.$$
For the second term
we split the sum into $\int_{nr}^{(n+1)r}x^{-\alpha}\int_{x-r}^{x+r}t^{-\alpha}~dt~dx$ for $n=2, \ldots, \lceil 1/r\rceil$.  This yields,
$$\int_{nr}^{(n+1)r}x^{-\alpha}\int_{x-r}^{x+r}t^{-\alpha}~dt~dx \le \int_{nr}^{(n+1)r} x^{-\alpha}((n-1)r)^{-\alpha}2r \lesssim r^2(nr)^{-2\alpha}.$$
Since $(1/n^{2\alpha})_n$ is a summable sequence, we estimate  $\int \mu(B(x, r))~d\mu(x)$ from above by $r^{2(1-\alpha)}$.  Therefore $\underline{C}_\mu \ge 2(1-\alpha)$.

On the other hand, since
\begin{align*}
\int_0^{r}\mu(B(x, r))~d\mu(x) & \asymp \int_0^{r}(x+r)^{1-\alpha} x^{-\alpha}~dx  \ge \int_0^{r} (x+r)^{1-2\alpha}~dx \asymp r^{2(1-\alpha)},
\end{align*}
we obtain $\overline{C}_\mu \le 2(1-\alpha)$.
\end{proof}

From now on, suppose that we are dealing with $X=\R^d$ and $\mu$, $\mu_F$ being acips with $m$ denoting normalised Lebesgue measure. We assume $\overline{\bigcup_iY_i}= Y$. First notice that if $F$ has bounded distortion (in the one-dimensional case, it  sufficient that $F$ is $C^{1+\alpha}$ with uniform constants), $\frac{d\mu_F}{dm}$ is uniformly bounded away from 0 and 1, so $C_{\mu_F} = C_{m}=d$.  

For $C_{\mu}$ we assume that $\frac{d\mu}{dm} = \rho$.  Moreover we assume there is $C>0$ with $\rho(x)\ge C$ for any $x\in X$ and that $\rho\in L^2$.   Thus $C_\mu=d$.

\subsection{Manneville-Pomeau maps}

For $\alpha \in (0,1)$, define the Manneville-Pomeau map by
\begin{equation*}
f=f_\alpha:x\mapsto \begin{cases} x(1+2^\alpha x^\alpha) & \text{ if } x\in [0, 1/2),\\
2x-1 & \text{ if } x\in [1/2, 1].\end{cases}
\end{equation*}
(This is the simpler form given by Liverani, Saussol and Vaienti, often referred to as LSV maps). This map has an acip $\mu$.  The standard procedure is to induce on $Y=[1/2, 1]$, letting $\tau$ be the first return time to $Y$.  Then by \cite[Lemma 2.3]{LivSauVai99}, $\rho\in L^2$ if $\alpha\in (0, 1/2)$.  As in for example \cite[Lemma 3.60]{Alv20}, the  map $f^\tau$ is Gibbs-Markov, so \cite[Theorem 6]{BarLiaRou19}  implies \eqref{eq:WTS}.  Thus we can apply Theorem~\ref{thm:induced_exp} to our system whenever $\alpha\in (0, 1/2)$.

In the case $\alpha\in (1/2, 1)$ then the density is similar to that in Proposition~\ref{prop:nonreg} and a similar proof gives $C_\mu=2(1-\alpha) <1 =C_{\mu_F}$, so our upper and lower bounds on the behaviour of $M_n$ do not coincide.

\subsection{Multimodal and other interval maps}

Our results apply to a wide range of interval maps with equilibrium states, for example many of those considered in \cite{DobTod20}, which guarantees the existence of inducing schemes under mild conditions.  Here we will focus on $C^3$ interval maps $f:I\to I$ (where $I=[0,1]$) with critical points with order in $(1, 2)$, i.e. for $c$ with $Df(c)=0$, there is a diffeomorphism $\phi:U\to \R$ with $U$ a neighbourhood of 0, such that if $x$ is close to $c$ then $f(x) = f(c) \pm \phi(x-c)^{\ell_c}$ for $\ell_c\in (1,2)$.  Moreover, we assume that for each critical point $c$, $|Df^n(c)|\to \infty$ and that for any open set $V\subset I$ there exists $n\in \N$ such that $f^n(V)=I$.  Then as in the main theorem in \cite{BruRivSheStr08} the system has an acip and the density is $L^2$ and hence Theorem~\ref{thm:induced_exp} applies.

\subsection{Higher dimensional examples}

We will not go into details here, but there is a large literature on nonuniformly expanding systems in higher dimensions which have acips and which have inducing schemes with tails which decay faster than polynomially.  A standard class of examples of this are the maps derived from expanding maps given in \cite{AlvBonVia00}.

\section{Orbits closeness for flows}  
\label{sec:flows}

In this section, we will extend our study to flows. First of all, as in Theorem~\ref{thm:BLR}, we will prove that an upper bound (related to the correlation dimension of the invariant measure) can be obtain in a general setting. Then, under some mixing assumptions, we will give an equivalent of Theorem~\ref{thm:induced_exp} for flows. We will prove the abstract results before giving specific examples.

Let $(X, \Psi_t, \nu)$ be a measure preserving flow on a manifold.  We will study the shortest distance between two orbits of the flow, defined by 
$$\m_t(x, y)= \m_{\Psi, t}(x, y):=\min_{0\le t_1,t_2< t} d(\Psi_{t_1}(x), \Psi_{t_2}(y)).$$

We assume that the flow has bounded speed: there exists $K\ge 0$ such that for $T>0$, $d(\Psi_t(x),\Psi_{t+T}(x))\le KT$. 

We will also assume that the flow is Lipschitz: there exists $L>0$ such that $d(\Psi_t(x),\Psi_t(y))\leq L^t d(x,y)$, and then prove an analogue of Theorem~\ref{thm:BLR}.

\begin{theorem}
For $(X, \Psi_t, \nu)$ a measure preserving Lipschitz flow with bounded speed, we have
\begin{equation*}
\limsup_{t\rightarrow+\infty}\frac{\log \m_{\Psi, t}(x, y)}{-\log t} \le \frac{2}{\underline{C}_\nu-1} \qquad \nu\times\nu\text{-a.e. } x, y.
\end{equation*}
\label{thm:limsupflow}
\end{theorem}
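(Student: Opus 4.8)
The plan is to run the first-moment / Borel--Cantelli scheme used for Theorem~\ref{thm:BLR} in \cite{BarLiaRou19}, the one new feature being that in continuous time one of the two time parameters should be treated as ``free''. Write $g(r):=\int\nu(B(z,r))\,d\nu(z)$, so that $\underline{C}_\nu=\liminf_{r\to0}\log g(r)/\log r$; we may assume $\underline{C}_\nu>1$ (otherwise there is nothing to prove). The heart of the argument is the estimate
\[
\nu\times\nu\bigl(\{(x,y): \m_{\Psi,t}(x,y)<r\}\bigr)\ \le\ \frac{C_1\, t^{2}}{r}\,g(C_2 r)
\]
valid for all $t\ge1$ and all small $r>0$, with $C_1,C_2$ depending only on $K$ and $L$. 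The denominator $r$, rather than $r^{2}$, is precisely what converts the scaling exponent $\underline{C}_\nu$ of the discrete case into $\underline{C}_\nu-1$.

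To prove this estimate I would argue as follows. Suppose $\m_{\Psi,t}(x,y)<r$, and fix $u,v\in[0,t)$ with $d(\Psi_u x,\Psi_v y)<r$. Set $s:=\lceil v\rceil-v\in[0,1)$ and flow \emph{both} points by time $s$: the Lipschitz hypothesis gives $d(\Psi_{u+s}x,\Psi_{\lceil v\rceil}y)\le L^{s}r\le L'r$ with $L':=\max\{L,1\}$, and now the $y$-time $\ell:=\lceil v\rceil$ is an integer in $\{0,\dots,\lceil t\rceil\}$ while $u':=u+s\in[0,t+1)$. Next discretise $u'$ along the grid $\tfrac{r}{K}\Z$: for the largest grid point $u_i\le u'$, bounded speed gives $d(\Psi_{u_i}x,\Psi_{u'}x)\le K|u'-u_i|<r$, whence $d(\Psi_{u_i}x,\Psi_{\ell}y)<(1+L')r$. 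Thus $\{\m_{\Psi,t}<r\}$ is contained in the union, over the at most $\lesssim t^{2}/r$ pairs $(\ell,i)$ with $\ell\in\{0,\dots,\lceil t\rceil\}$ and $u_i\in\tfrac{r}{K}\Z\cap[0,t+1)$, of the sets $\{(x,y): d(\Psi_{u_i}x,\Psi_{\ell}y)<(1+L')r\}$. For each such pair, integrating first in $y$ and then in $x$ and using that $\Psi_{\ell}$ and $\Psi_{u_i}$ preserve $\nu$ yields $\nu\times\nu(\{d(\Psi_{u_i}x,\Psi_{\ell}y)<(1+L')r\})=g((1+L')r)$, and summing over the pairs gives the displayed inequality.

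To conclude: by definition of $\underline{C}_\nu$, for every $\delta>0$ we have $g(r)\le r^{\underline{C}_\nu-\delta}$ for all small $r$, so the estimate above gives $\nu\times\nu(\m_{\Psi,t}<r)\lesssim t^{2}r^{\underline{C}_\nu-1-\delta}$. Fix $\eps>0$, put $\gamma:=\tfrac{2}{\underline{C}_\nu-1}+\eps$, and choose $\delta=\delta(\eps)$ small enough that $2-\gamma(\underline{C}_\nu-1-\delta)<0$. Taking $t=t_m:=2^{m}$ and $r=t_m^{-\gamma}$ makes $\sum_m\nu\times\nu(\m_{\Psi,2^m}<2^{-\gamma m})$ a convergent geometric series, so by Borel--Cantelli, $\nu\times\nu$-a.e.\ $(x,y)$ satisfies $\m_{\Psi,2^m}(x,y)\ge2^{-\gamma m}$ for all large $m$. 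Since $t\mapsto\m_{\Psi,t}(x,y)$ is non-increasing, interpolating between consecutive powers of $2$ yields $\m_{\Psi,t}(x,y)\ge2^{-\gamma}t^{-\gamma}$ for all large $t$, hence $\limsup_{t\to\infty}\frac{\log\m_{\Psi,t}(x,y)}{-\log t}\le\gamma$; letting $\eps\downarrow0$ along a sequence and discarding the corresponding countable union of null sets proves the theorem. The only step genuinely particular to flows — and the one requiring care — is the covering estimate: extracting the factor $1/r$ rather than $1/r^{2}$ hinges on using the flow itself to translate both orbits simultaneously by a bounded time so as to normalise one time parameter to an integer; the rest is a routine transcription of the discrete argument of \cite{BarLiaRou19}.
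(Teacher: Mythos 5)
Your proof is correct, and the key computation --- extracting a factor $t^2/r$ rather than $t^2/r^2$ by treating the two time parameters asymmetrically --- is exactly the point of the paper's argument as well; the difference is in the packaging. The paper introduces the continuous pair-correlation integral $S_{t,r}(x,y)=\int_0^t\int_0^t\mathbbm{1}_{B(\Psi_{t_1}(x),r)}(\Psi_{t_2}(y))\,dt_2\,dt_1$, shows that a single near-hit at $(\bar t_1,\bar t_2)$ forces $S_{2t,K_0r}\ge r$ (because the whole $1\times r$ rectangle of shifted time pairs $(\bar t_1+s,\bar t_2+s+q)$ still gives hits, by the same Lipschitz-plus-bounded-speed estimate you use), and then applies Markov's inequality; you instead run a union bound over a discretization of the time square into a unit grid in one variable and an $r/K$-grid in the other, snapping the hitting pair to a grid point. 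The two mechanisms are dual implementations of the same first-moment idea and use the two hypotheses in the same roles, so neither buys more generality; your union bound is perhaps slightly more elementary, while the paper's integral formulation avoids choosing a grid. Your Borel--Cantelli step also differs cosmetically: you take $t_m=2^m$ and a target exponent $\gamma$ strictly larger than $2/(\underline{C}_\nu-1)$, getting a genuinely geometric series, whereas the paper keeps the sharp exponent, inserts a $\log t$ correction into $r_t$, and passes to the sparse subsequence $t_\ell=\lceil e^{\ell^2}\rceil$; both are standard and both require the same monotonicity interpolation at the end. One small caveat, shared with the paper: the argument (and indeed the meaningfulness of the bound $2/(\underline{C}_\nu-1)$) tacitly requires $\underline{C}_\nu>1$, which you flag explicitly; for $\underline{C}_\nu<1$ the right-hand side is negative and the statement should be read as vacuous only with the convention that the bound is then $+\infty$.
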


\begin{proof}
We define
\[S_{t,r}(x,y)=\int_0^t\int_0^t\mathbbm{1}_{B(\Psi_{t_1}(x),r)}(\Psi_{t_2}(y))dt_2 dt_1.\]
Observe that for $t>1>r$
\begin{equation*}\label{eqMnSn}
\left\{(x,y): \m_t(x,y)< r\right\}\subset \left\{(x,y):S_{2t,K_0r}(x,y)\geq r\right\}\end{equation*}
where $K_0=K+\max\{1,L\}$.

Indeed, for $(x,y)$ such that $\m_t(x,y)< r$, there exist $0\le \bar t_1,\bar t_2<t$, such that $d(\Psi_{\bar t_1}(x), \Psi_{\bar t_2}(y))<r$. Thus, for any $s\in[0,1]$ and $q\in[0,r]$, we have
\begin{eqnarray*}
d(\Psi_{\bar t_1+s}(x),\Psi_{\bar t_2+s+q}(y))&\leq&d(\Psi_{\bar t_1+s}(x),\Psi_{\bar t_2+s}(y))+d(\Psi_{\bar t_2+s}(y),\Psi_{\bar t_2+s+q}(y))\\
&\leq& L^sd(\Psi_{\bar t_1}(x),\Psi_{\bar t_2}(y))+Kq < K_0r
\end{eqnarray*}
and we obtain
\begin{eqnarray*}
S_{2t,K_0r}(x,y)&=&\int_0^{2t}\int_0^{2t}\mathbbm{1}_{B(\Psi_{t_1}(x),K_0r)}(\Psi_{t_2}(y))dt_2 dt_1\\
&\geq&\int_{0}^{1}\int_{s}^{s+r}\mathbbm{1}_{B(\Psi_{\bar t_1+s}(x),K_0r)}(\Psi_{\bar t_2 +s+q}(y))dq ds=r.
\end{eqnarray*}
Then, using Markov's inequality and the invariance of $\nu$,
\begin{align*}
\nu\otimes\nu \left((x,y):  \m_t(x,y)< r\right)&\leq \nu\otimes\nu \left((x,y): S_{2t,K_0r}(x,y)\geq r\right)\\
&\hspace{-1cm}\leq r^{-1}\E(S_{2t,K_0r})\\
&\hspace{-1cm}=r^{-1}\int_0^{2t}\int_0^{2t}\iint \mathbbm{1}_{B(\Psi_{t_1}(x),K_0r)}(\Psi_{t_2}(y))d\nu\otimes\nu(x,y)dt_2 dt_1\\
&\hspace{-1cm}=r^{-1}\int_0^{2t}\int_0^{2t}\int \nu(B(\Psi_{t_1}(x),K_0r))d\nu(x)dt_2 dt_1\\
&\hspace{-1cm}=r^{-1}(2t)^2\int\nu\left(B(x,K_0r)\right)d\nu(x).
\end{align*}
For $\eps>0$, let us define 
\[r_t=(t^{2}\log t )^{-1/(\underline{C}_\nu-1-\eps)}.\]
By the definition of the lower correlation dimension, for $t$ large enough, we have
\[\nu\otimes\nu \left((x,y):\m_t(x,y)< r_t\right)\leq (r_t)^{-1}(2t)^2(K_0r_t)^{\underline{C}_\nu-\eps}=\frac{c}{\log t}\]
with $c=4K_0^{\underline{C}_\nu-\eps}$.
Therefore, choosing a subsequence $t_\ell=\lceil e^{\ell^2}\rceil$, we have 
\[\nu\otimes\nu \left((x,y):\m_{t_\ell}(x,y)< r_{t_\ell}\right)\leq \frac{c}{\ell^2}.\]
Thus, by the Borel-Cantelli Lemma, for $\nu\otimes\nu$-almost every $(x,y)\in X\times X$, if $\ell$ is large enough then
\[\m_{t_\ell}(x,y)\geq r_{t_\ell}\]
and
\[\frac{\log \m_{t_\ell}(x,y)}{-\log t_\ell}\leq \frac{1}{\underline{C}_\nu-1-\eps}\left(2+\frac{\log\log t_\ell}{\log t_\ell}\right).\]
Finally, taking the limsup in the previous equation and observing that  $(t_\ell)_\ell$ is increasing, $(\m_t)_t$ is decreasing and $\underset{\ell\rightarrow+\infty}\lim\frac{\log t_\ell}{\log t_{\ell+1}}=1$, we have
\[ \underset{t\rightarrow+\infty}{\limsup}\frac{\log \m_t(x,y)}{-\log t}=\underset{\ell\rightarrow+\infty}{\limsup}\frac{\log \m_{t_\ell}(x,y)}{-\log t_\ell}\leq \frac{2}{\underline{C}_\nu-1-\eps}.\]
Then the theorem is proved since $\eps$ can be chosen arbitrarily small.
\end{proof}

%%%%%%%%%
%%%%%%%%%%
To obtain the lower bound, we will assume the existence of a Poincar\'e section $Y$ transverse to the direction of the flow, we denote by $\tau(x)$ the first hitting time of $x$ in $Y$, and obtain $F=\Psi_{\tau}$ on $Y$, the Poincar\'e map and $\mu$ the measure induced on $Y$.
 \begin{theorem}
  Let $(X, \Psi_t, \nu)$ a measure preserving Lipschitz flow with bounded speed. We assume that there exists a Poincar\'e section $Y$ transverse to the direction of the flow such that the Poincar\'e map $(Y,F,\mu)$, or the relevant quotiented version $(\bar Y, \bar F, \bar\mu)$, satisfies \eqref{eq:WTS}. If $C_\mu$ exists and satisfies $C_\nu=C_\mu+1$, then 
$$
\lim_{t\to +\infty}\frac{\log \m_{\Psi, t}(x, y)}{-\log t} =\frac{2}{{C}_\nu-1}=\frac{2}{{C}_\mu} \qquad \nu\times\nu\text{-a.e. } x, y.$$
\label{thm:induced-flow}
\end{theorem}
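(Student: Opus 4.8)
The plan is to split the statement into its two halves and treat them very differently. The upper bound is essentially free: since $C_\nu$ is assumed to exist, $\underline C_\nu=C_\nu$, so Theorem~\ref{thm:limsupflow} already gives $\limsup_{t}\frac{\log\m_{\Psi,t}(x,y)}{-\log t}\le\frac{2}{C_\nu-1}=\frac{2}{C_\mu}$ for $\nu\times\nu$-a.e.\ $(x,y)$, where we used $C_\nu=C_\mu+1$. So the whole content is the matching lower bound $\liminf_t\frac{\log\m_{\Psi,t}(x,y)}{-\log t}\ge\frac{2}{C_\mu}$, and the plan is to reproduce the inducing argument of Theorem~\ref{thm:induced_exp}, with the Poincar\'e map $(Y,F,\mu)$ (or its quotient $(\bar Y,\bar F,\bar\mu)$) in the role of the induced system and the special-flow (suspension) representation of $\Psi$ over $(Y,F,\mu)$ in the role of the tower $\bigcup_{k<\tau}T^{-k}(\cdot)$.

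Concretely, I would first realise $(X,\Psi_t,\nu)$ as the suspension of $(Y,F,\mu)$ with roof $\tau$: writing $\bar\tau=\int\tau\,d\mu<\infty$, a $\nu$-typical point is $x=\Psi_{s(x)}(y'(x))$ with $y'(x)\in Y$, $0\le s(x)<\tau(y'(x))<\infty$, and the projection $x\mapsto y'(x)$ pushes $\nu$ to $\frac{\tau}{\bar\tau}\mu\ll\mu$. Setting $\tau_n(z)=\sum_{k=0}^{n-1}\tau(F^kz)$ and, for $\eps>0$, $N\in\N$,
\[U_{\eps,N}=\{z\in Y:|\tau_n(z)-n\bar\tau|\le\eps n\text{ for all }n\ge N\},\]
Birkhoff gives $\mu(U_{\eps,N})\uparrow 1$, hence by the roof formula $\nu(\{x:y'(x)\in A\})=\frac1{\bar\tau}\int_A\tau\,d\mu$ and monotone convergence $\nu(\{x:y'(x)\in U_{\eps,N}\})\uparrow 1$; thus for $\nu\times\nu$-a.e.\ $(x,y)$ there is $N$ with $y'(x),y'(y)\in U_{\eps,N}$, and — since $(y')_*\nu\ll\mu$ and $F\times F$ preserves $\mu\times\mu$ — with $(y'(x),y'(y))$ in the full-$\mu\times\mu$-measure set on which \eqref{eq:WTS} holds for $F$. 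The transfer mechanism is the identity $F^i(y'(x))=\Psi_{\tau_i(y'(x))-s(x)}(x)$, which for $1\le i\le n$, $n\ge N$, places $F^i(y'(x))$ on the forward $\Psi$-orbit of $x$ at a time in $\bigl(0,n(\bar\tau+\eps)\bigr]$, and likewise for $y$; so with $t_n:=n(\bar\tau+2\eps)$ and $n$ large,
\[\m_{\Psi,t_n}(x,y)\le\min_{1\le i,j\le n}d(F^i(y'(x)),F^j(y'(y)))=\m_{F,n}(F(y'(x)),F(y'(y))).\]
Feeding in \eqref{eq:WTS} at $(y'(x),y'(y))$ gives, for each $\eta>0$ and $n$ large, $\m_{F,n}(F(y'(x)),F(y'(y)))\le n^{-(2/C_\mu-\eta)}$; then using that $t\mapsto\m_{\Psi,t}(x,y)$ is nonincreasing, $t_{n+1}/t_n\to1$ and $\log n/\log t_n\to1$, interpolating between the $t_n$ yields $\liminf_t\frac{\log\m_{\Psi,t}(x,y)}{-\log t}\ge\frac{2}{C_\mu}-\eta$, and $\eta\downarrow0$ finishes it.

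I expect the chain of inequalities in the last step to be routine; the actual work is the measure-theoretic bookkeeping in the middle — justifying the special-flow representation of $\nu$ over $(Y,F,\mu)$, the a.e.\ finiteness of $s(x)$ and $\tau(y'(x))$, and the absolute-continuity statements that let a $\mu\times\mu$-a.e.\ input \eqref{eq:WTS} be fed by $\nu\times\nu$-a.e.\ data — together with checking, in the nonuniformly hyperbolic case, that quotienting out the contracting directions of $F$ disturbs none of this, exactly as in the passage from Theorem~\ref{thm:induced_exp} to Theorem~\ref{thm:induced_hyp}. It is worth noting that the Lipschitz and bounded-speed hypotheses are used only in the upper bound via Theorem~\ref{thm:limsupflow}; the lower bound needs only that $\Psi$-orbit points of $y'(x)$ are $\Psi$-orbit points of $x$.
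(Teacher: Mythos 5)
Your proposal is correct and is essentially the paper's own argument: the paper's proof consists precisely of invoking Theorem~\ref{thm:limsupflow} for the upper bound and ``mimicking the proof of Theorem~\ref{thm:induced_exp}'' for the lower bound, which is exactly what you carry out (your use of the backward projection $x\mapsto y'(x)$ onto the section, rather than short forward iterates into $U_{\eps,N}$ as in the discrete proof, is only a cosmetic variation). The one nitpick is that \eqref{eq:WTS} should be applied at $(F(y'(x)),F(y'(y)))$ rather than at $(y'(x),y'(y))$, which is harmless since $F\times F$ preserves $\mu\times\mu$.
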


 \begin{proof}
One can mimic the proof of Theorem~\ref{thm:induced_exp}  to prove that 
$$
\liminf_{t\to +\infty}\frac{\log \m_{\Psi, t}(x, y)}{-\log t} \geq \frac{2}{C_{\mu}}=\frac{2}{C_{\nu}-1} \qquad \nu\times\nu\text{-a.e. } x, y.$$
And the result is proved using Theorem~\ref{thm:limsupflow}.
\end{proof}

We note that we are not aware of cases where $C_\nu$ and $C_\mu$ are well defined, but the condition $C_\nu=C_\mu+1$ above fails.  We give various examples in the remainder of this section of cases where these conditions hold.

\subsection{Examples of flows}
Examples where $C_\nu$ exists and there is a  Poincar\'e section as in Theorem~\ref{thm:induced-flow} with a measure $\mu$ such that  $C_\mu$ exists include Teichm\"uller flows \cite{AviGouYoc06}, a large class of geodesic flows with negative curvature, see \cite{BurMasMatWil17}, a classic example being the geodesic flow on the modular surface. In these cases, the relevant measure for (the tangent bundle on) the flow is Lebesgue, and the measure on the Poincar\'e section is an acip.

In the case of conformal Axiom A flows, the conditions of Theorem~\ref{thm:induced-flow} hold for equilibrium states of H\"older potentials, see the proof of \cite[Theorem 5.2]{PesSad01}.

\subsection{Suspension flows}\label{sec:suspflow}
 For Theorem~\ref{thm:induced-flow}, we assume that $C_\nu=C_\mu+1$. Obtaining this equality in a general setting is an open and challenging problem. In this section, we will prove that, under some natural assumptions, for suspension flows this equality holds.

Let $T:X\rightarrow X$ be a bi-Lipschitz transformation on the separable metric space $(X,d)$.  

 Let $\phi:X\rightarrow (0,+\infty)$ be a Lipschitz function. We define the space:
\[Y:=\left\{(u,s)\in X\times\R : 0\leq s\leq \phi(u)\right\}\]
where $(u,\phi(u))$ and $(Tu,0)$ are identified for all $u\in X$.
The {\it suspension flow} or the {\it special flow} over $T$ with height function $\phi$ is the flow $\Psi$ which acts on $Y$ by the following transformation
\[\Psi_t(u,s)=(u,s+t).\]
The metric on $Y$ is the Bowen-Walters distance, see \cite{BowWal72}.  Firstly, we recall the definition of the Bowen-Walters distance $d_1$ on $Y$ when $\phi(x)=1$ for every $x\in X$. Let $x,y\in X$ and $t\in [0,1]$, the length of the horizontal segment $[(x,t),(y,t)]$ is defined by:
\[\alpha_h((x,t),(y,t))=(1-t)d(x,y)+td(Tx,Ty).\]
Let $(x,t),(y,s)\in Y$ be on the same orbit, the lenght of the vertical segment $[(x,t),(y,s)]$ is defined by
\[\alpha_v((x,t),(y,s))=\inf\{|r|:\Psi_r(x,t)=(y,s)\textrm{ and }r\in\R\}.\]
Let $(x,t),(y,s)\in Y$, the distance $d_1((x,t),(y,s))$ is defined as the infimum of the lenghts of paths between $(x,t)$ and $(y,s)$ composed by a finite number of horizontal and vertical segments. 
When $\phi$ is arbitrary, the Bowen-Walters distance on $Y$ is given by
\[d_Y((x,t),(y,s))=d_1\left(\left(x,\frac{t}{\phi(x)}\right),\left(y,\frac{s}{\phi(y)}\right)\right).\]
For more details on the Bowen-Walters distance, one can see the Appendix A of \cite{BS}.

 Let $\mu$ be a $T$-invariant Borel probability measure in $X$. We recall that the measure $\nu$ on $Y$ is invariant for the flow $\Psi$ where
 \begin{equation*}\label{eqinvsusp}
 \int_Y g d\nu=\frac{\int_X\int_0^{\phi(x)}g(x,s)ds d\mu(x)}{\int_X\phi d\mu}
 \end{equation*}
for every continuous function $g:Y\rightarrow\R$. Moreover, any $\Psi$-invariant measure is of this form.  For an account of equilibrium states for suspension flows, see for example \cite{IomJorTod15}.

\begin{theorem}\label{th:dimflow}
Let $X$ be a compact space and $T:X\rightarrow X$ a bi-Lipschitz transformation. We assume that for the invariant measure $\mu$, the correlation dimension exists. If $\Psi$ is a suspension flow over $T$ as above then
\[C_\nu=C_\mu+1\]
with respect to the Bowen-Walters distance.
\end{theorem}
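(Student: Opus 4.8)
The plan is to compute $\int_Y\nu(B(z,r))\,d\nu(z)$ by slicing $Y$ into its fibres, after first establishing a uniform two-sided comparison between a small Bowen--Walters ball in $Y$ and a ``flow-box'' built from a small ball in the base $X$ together with a short flow-time interval. The starting point is the observation that the displayed formula above for $\nu$ says precisely that, writing $\bar\phi:=\int_X\phi\,d\mu\in(0,\infty)$, the measure $\nu$ is the normalisation $\frac1{\bar\phi}(\mu\otimes\mathrm{Leb})$ of the product measure restricted to the fundamental domain $Y=\{(u,s):0\le s\le\phi(u)\}$ (with $(u,\phi(u))\sim(Tu,0)$). Hence a flow-box around $(x,s)$ --- the image, over a time interval of length $\asymp r$, of a horizontal slice sitting over a ball $B(x,\rho)$ with $\rho\asymp r$ --- has $\nu$-measure $\asymp\frac1{\bar\phi}\,r\,\mu(B(x,\rho))$; when such a box straddles the top of a fibre, the identification cuts it into a top slab and a bottom slab whose base footprints are, because $T$ is bi-Lipschitz, both balls comparable to $B(x,\rho)$ with constants depending only on the bi-Lipschitz constants of $T$, so the estimate survives uniformly in $(x,s)$.

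Next I would prove the geometric heart of the matter: there is $c\ge1$, depending only on $\min_X\phi$, $\max_X\phi$, the Lipschitz constant of $\phi$ and the bi-Lipschitz constants of $T$, such that for every $(x,s)\in Y$ and all sufficiently small $r>0$, the flow-box of size $r/c$ around $(x,s)$ is contained in the $d_Y$-ball $B((x,s),r)$, which in turn is contained in the flow-box of size $cr$ around $(x,s)$. For points whose normalised height stays away from $0$ and $\phi$ this is immediate from the definition of $d_1$: a horizontal segment at height $t$ has length $(1-t)d(\cdot,\cdot)+t\,d(T\cdot,T\cdot)$, comparable to $d(\cdot,\cdot)$ because $T$ is bi-Lipschitz; a vertical segment contributes the flow time; and passing from $d_1$ to $d_Y$ rescales heights only by the map $(u,\sigma)\mapsto(u,\sigma\phi(u))$, which is bi-Lipschitz since $\phi$ is Lipschitz and bounded away from $0$ and $\infty$ on the compact space $X$. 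Near the top or bottom of a fibre one splices in the identification $(u,\phi(u))\sim(Tu,0)$ together with the bi-Lipschitz property of $T$; the precise two-sided estimates for $d_Y$ needed here are those collected in \cite[Appendix~A]{BS}. Combining this with the previous paragraph yields, uniformly in $(x,s)\in Y$,
\[\nu\bigl(B((x,s),r)\bigr)\ \asymp\ \frac{r}{\bar\phi}\,\mu\bigl(B(x,r)\bigr),\]
where the implied constants (including the one absorbed by replacing the radius inside $\mu(B(x,\cdot))$ by a fixed multiple of $r$) are independent of $(x,s)$.

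Finally I would integrate and pass to the limit. Using the formula for $\nu$ once more,
\[\int_Y\nu\bigl(B(z,r)\bigr)\,d\nu(z)=\frac1{\bar\phi}\int_X\int_0^{\phi(x)}\nu\bigl(B((x,s),r)\bigr)\,ds\,d\mu(x)\ \asymp\ \frac{r}{\bar\phi^{2}}\int_X\phi(x)\,\mu\bigl(B(x,r)\bigr)\,d\mu(x),\]
and since $\phi$ is bounded above and below by positive constants the last integral is comparable to $r\int_X\mu(B(x,r))\,d\mu(x)$. Taking logarithms, dividing by $\log r<0$ and letting $r\to0$, the factor $r$ contributes $1$ to the limit, the fixed multiplicative constants contribute $0$, and, because $C_\mu$ exists, $\frac{\log\int_X\mu(B(x,\lambda r))\,d\mu(x)}{\log r}\to C_\mu$ for every fixed $\lambda>0$ (a change of radius by a constant factor is invisible in the limit). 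A squeeze then gives $\lim_{r\to0}\frac{\log\int_Y\nu(B(z,r))\,d\nu(z)}{\log r}=1+C_\mu$, so $C_\nu$ exists and equals $C_\mu+1$, as claimed.

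The main obstacle I expect is the uniformity of the flow-box comparison near the top and bottom of the fibres, where the gluing $(u,\phi(u))\sim(Tu,0)$ and the non-constant roof $\phi$ interact; this is exactly where the hypotheses that $T$ is bi-Lipschitz and that $\phi$ is Lipschitz and bounded away from $0$ and $\infty$ are essential, and it is dispatched by the Bowen--Walters distance estimates of \cite[Appendix~A]{BS}. Everything else is bookkeeping with fixed constants.
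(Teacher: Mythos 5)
Your proposal is correct and follows essentially the same route as the paper: compare small Bowen--Walters balls two-sidedly with product ``flow boxes'' $B(x,\rho)\times(\text{interval})$ via the $d_\pi$-estimates of \cite[Appendix A]{BS}, exploit the product form of $\nu$ together with the $T$-invariance of $\mu$ and the bi-Lipschitz/Lipschitz hypotheses to control the gluing, and then take logarithms (noting that a constant rescaling of the radius is invisible in the limit because $C_\mu$ exists). The only cosmetic difference is that the paper sidesteps the fiddly uniform lower bound near the identification $(u,\phi(u))\sim(Tu,0)$ by proving the inclusion of a flow box inside the $d_Y$-ball only for points of height in $(\varepsilon,\phi(x)-\varepsilon)$, which is enough for the integrated estimate, whereas for the upper bound it covers the ball by three product sets based at $x$, $Tx$ and $T^{-1}x$ exactly as you describe.
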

\begin{remark} Under the same assumptions, one can observe that if $C_\mu$ does not exist, then we have $\underline C_\nu=1+\underline C_\mu$ and $\overline C_\nu=1+\overline C_\mu$.  
\end{remark}

Before proving the theorem, we will recall some properties of the Bowen-Walters distance. First of all, for $(x,s)$ and $(y,t)\in Y$, we define
\[d_\pi((x,s),(y,t))=\min\left\{\begin{array}{l}d(x,y)+|s-t| \\ d(Tx,y)+\phi(x)-s+t \\ d(x, Ty)+\phi(y)-t+s\end{array}\right\}.\]

\begin{proposition}{\cite[Proposition 17]{BS}}
\label{propdpi}
 There exists a constant $c>1$ such that for each $(x,s)$ and $(y,t)\in Y$
\[c^{-1}d_\pi((x,s),(y,t))\leq d_Y((x,s),(y,t))\leq c d_\pi((x,s),(y,t)).\]

\end{proposition}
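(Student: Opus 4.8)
The plan is to establish Proposition~\ref{propdpi} by bounding the Bowen--Walters distance $d_Y$ above and below by constant multiples of the more explicit comparison quantity $d_\pi$. Since the Bowen--Walters distance is defined as an infimum over concatenations of horizontal and vertical segments, the upper bound $d_Y \le c\, d_\pi$ is the easier half: for each of the three candidates realizing the minimum in $d_\pi((x,s),(y,t))$, I would exhibit an explicit path of bounded length. For the first candidate, $d(x,y)+|s-t|$, one first flows vertically from $(x,s)$ to $(x,t)$ (a vertical segment of length comparable to $|s-t|$ after the $\phi$-rescaling built into $d_Y$), then takes a single horizontal segment from $(x,t)$ to $(y,t)$ whose length $\alpha_h$ is $(1-t)d(x,y)+td(Tx,Ty)$, which by the bi-Lipschitz property of $T$ is comparable to $d(x,y)$. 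For the other two candidates one does the analogous thing but passing through the identification $(x,\phi(x))\sim(Tx,0)$, i.e.\ flowing up to the top of the fiber, crossing, and coming back down; the Lipschitz bounds on $\phi$ and the bi-Lipschitz bounds on $T$ keep all the lengths comparable. Taking the best of the three shows $d_Y \le c\, d_\pi$.

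For the lower bound $d_Y \ge c^{-1} d_\pi$, the idea is that $d_\pi$ is, up to a constant, itself a genuine pseudometric dominated by the length of \emph{any} admissible path, so it suffices to check this on a single horizontal or vertical segment and then invoke the triangle inequality for $d_\pi$ (which must be verified, or quoted from \cite{BS}). A vertical segment from $(x,s)$ to $(x,s')$ on the same orbit has $d_\pi$ at most $|s-s'|$ plus possibly a term when the orbit wraps through the top, and this is controlled by the segment's length after rescaling by $\min\phi>0$ and $\max\phi<\infty$ (finite and positive since $X$ is compact and $\phi$ is continuous and strictly positive). A horizontal segment $[(x,t),(y,t)]$ has $d_\pi((x,t),(y,t)) \le d(x,y)+0$, which is $\le \alpha_h((x,t),(y,t))$ up to the bi-Lipschitz constant of $T$ (bounding $d(x,y)$ by a constant times $(1-t)d(x,y)+td(Tx,Ty)$). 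Chaining along a path realizing $d_Y$ up to $\eps$ and letting $\eps\to 0$ then yields $d_\pi \le c\, d_Y$.

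The main obstacle I anticipate is the bookkeeping around the fiber identification $(x,\phi(x))\sim(Tx,0)$ and the $\phi$-dependent rescaling $t\mapsto t/\phi(x)$ in the definition of $d_Y$: the three-way minimum in $d_\pi$ is precisely engineered to account for the three topologically distinct ways a short path can go (stay in one fiber, cross upward to the next fiber, or cross downward to the previous one), and matching each case to an explicit path while keeping every constant uniform requires the compactness of $X$ together with the uniform Lipschitz bounds on $\phi$ and $T^{\pm 1}$. In particular one must be careful that when $|s-t|$ or $d(x,y)$ is of order the fiber height, the ``obvious'' path is not the shortest one, which is exactly why all three alternatives appear. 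Since this is the content of \cite[Proposition~17]{BS}, in the paper I would give the construction of the three comparison paths for the upper bound and the segment-wise estimate plus $d_\pi$-triangle-inequality argument for the lower bound, and otherwise refer to \cite[Appendix~A]{BS} for the routine verifications.
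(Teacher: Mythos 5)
The paper itself gives no proof of this statement---it is quoted directly from \cite[Proposition 17]{BS}---so the only question is whether your sketch would stand on its own. Your upper-bound half would: building, for each of the three alternatives in $d_\pi$, an explicit concatenation of one horizontal segment and at most two vertical segments, and using that $\alpha_h((x,u),(y,u))\asymp d(x,y)$ (bi-Lipschitzness of $T$) and that vertical lengths in $d_1$ are comparable to time differences (since $0<\min\phi\le\max\phi<\infty$ by compactness), is exactly the right computation.

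The lower bound, however, has a genuine gap: your plan rests on ``the triangle inequality for $d_\pi$ (which must be verified, or quoted from \cite{BS})'', but $d_\pi$ is \emph{not} a metric and the triangle inequality fails in general. Concretely, take $a=(x,s)$, move horizontally to $b=(z,s)$, then wrap through the roof to $c=(Tz,t')$ with $t'$ small: then $d_\pi(a,b)\le d(x,z)$ and $d_\pi(b,c)\le \phi(z)-s+t'$, while the only competitive alternative for $d_\pi(a,c)$ is $d(Tx,Tz)+\phi(x)-s+t'$, and $d(Tx,Tz)$ can be as large as $L\,d(x,z)$. So at best $d_\pi$ satisfies a quasi-triangle inequality $d_\pi(a,c)\le C\bigl(d_\pi(a,b)+d_\pi(b,c)\bigr)$ with $C>1$ depending on the Lipschitz constants, and chaining such an inequality along a path realizing $d_Y$ up to $\eps$ compounds the constant with the (unbounded) number of segments, so no uniform $c$ comes out. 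The correct route---and what \cite[Appendix A]{BS} actually does---is to estimate an arbitrary admissible chain globally rather than segment-by-segment through $d_\pi$: each horizontal segment at height $u$ satisfies $\alpha_h\ge L^{-1}d(\cdot,\cdot)$ for both the endpoints and their $T$-images, so one can sum the horizontal contributions using the genuine triangle inequality in $(X,d)$ and the vertical contributions using the triangle inequality in $\R$, after splitting according to the net number of roof crossings of the chain: net crossing number $0$, $+1$ or $-1$ produces precisely the three alternatives in $d_\pi$, while two or more crossings force a total vertical length bounded below by a quantity of order $\min\phi$, in which case $d_\pi$, being bounded above by the diameter of $X$ plus $2\max\phi$, is dominated trivially. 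Without this global bookkeeping of crossings, the ``segment-wise estimate plus $d_\pi$-triangle-inequality'' argument does not close.
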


\begin{proof}[Proof of Theorem~\ref{th:dimflow}] We will denote $L$ a constant which is simultaneously a Lipschitz constant for $T$, $T^{-1}$ and $\phi$. 

Let $0<\eps<\frac{\min\{\phi(x)\}}{2}$. We define
\[Y_\eps=\left\{(x,s)\in Y: \eps<s<\phi(x)-\eps\right\}.\]
We will prove that for all $(x,s)\in Y_\eps$ and all $0<r<\min\{c\eps, \frac{c\eps}{L}\}$ 
\begin{enumerate}
\item $B(x,\frac{r}{2c})\times(s-\frac{r}{2c},s+\frac{r}{2c})\subset Y$ \label{bsuby}
\item $B(x,\frac{r}{2c})\times(s-\frac{r}{2c},s+\frac{r}{2c})\subset B_Y((x,s),r)$\label{bisubb}
\end{enumerate}
where $B_Y((x,s),r)$ denotes the ball centred in $(x,s)$ and of radius $r$ with respect to the distance $d_Y$.

Let $(y,t)\in B(x,\frac{r}{2c})\times(s-\frac{r}{2c},s+\frac{r}{2c})$. 

Since $s>\eps$ and $\frac{r}{c}<\eps$, we have $t>s-\frac{r}{2c}>\frac{\eps}{2}>0$.

Since $\phi$ is $L$-Lipschitz, we have $|\phi(x)-\phi(y)|\leq L d(x,y)<\frac{Lr}{2c}$. Moreover, since $s<\phi(x)-\eps$, we obtain
\begin{eqnarray*}
t&<&s+\frac{r}{2c} <\phi(x)-\eps+\frac{\eps}{2}\\
&< &\phi(y)+\frac{Lr}{2c}-\frac{\eps}{2}< \phi(y).
\end{eqnarray*}
Thus $(y,t)\in Y$ and \eqref{bsuby} is proved.

For $(y,t)\in B(x,\frac{r}{2c})\times(s-\frac{r}{2c},s+\frac{r}{2c})$, we can use Proposition~\ref{propdpi} to obtain
\begin{eqnarray*}
d_Y((x,s),(y,t))&\leq& c d_\pi((x,s),(y,t))\\
&\leq & c\left(d(x,y)+|s-t|\right)\\
&<&c\left(\frac{r}{2c}+\frac{r}{2c}\right)=r
\end{eqnarray*}
and 
\eqref{bisubb} is proved.

We can now use \eqref{bsuby} and \eqref{bisubb} to obtain an upper bound for $C_\nu$. For $0<r<\min\{c\eps, \frac{c\eps}{L}\}$, we have
\begin{align*}
\int_Y\nu(B_Y((x,s),r)) & d\nu(x,s)\geq  \int_Y\mathbbm{1}_{Y_\eps}(x,s)\nu(B_Y((x,s),r))d\nu(x,s)\\
&\geq\frac{1}{\int_X\phi d\mu}\int_X\int_\eps^{\phi(x)-\eps}\nu(B_Y((x,s),r))ds d\mu(x)\\
&\geq \frac{1}{\int_X\phi d\mu}\int_X\int_\eps^{\phi(x)-\eps}\nu\left(B\left(x,\frac{r}{2c}\right)\times\left(s-\frac{r}{2c},s+\frac{r}{2c}\right)\right)ds d\mu(x)\\
&= \left(\frac{1}{\int_X\phi d\mu}\right)^2\int_X\int_\eps^{\phi(x)-\eps}\frac{r}{c}\mu\left(B\left(x,\frac{r}{2c}\right)\right)ds d\mu(x)\\
&\geq\left(\frac{1}{\int_X\phi d\mu}\right)^2\min(\phi(x)-2\eps)\frac{r}{c}\int_X\mu\left(B\left(x,\frac{r}{2c}\right)\right)d\mu(x)\\
&\geq C_1 r\int_X\mu\left(B\left(x,\frac{r}{2c}\right)\right)d\mu(x)
\end{align*}
with $C_1=\left(\frac{1}{\int_X\phi d\mu}\right)^2\min(\phi(x)-2\eps)\frac{1}{c}>0$. We conclude that
\begin{equation}\label{cnuleqcmu}
\limsup_{r\rightarrow0}\frac{\log\int_Y\nu(B_Y((x,s),r))d\nu(x,s)}{\log r}\leq \lim_{r\rightarrow0}\frac{\log C_1 r\int_X\mu(B(x,\frac{r}{2c}))d\mu(x)}{\log r}=1+C_\mu.
\end{equation}
To prove the lower bound, we define, for $(x,s)\in Y$, the sets
\begin{eqnarray*}
B_1&=& B(x,cr)\times(s-rc,s+rc)\\
B_2 &=& B(Tx,cr)\times[0,rc)\\
B_3 &=& \{(y,t)\in Y:y\in B(T^{-1}x,Lrc)\textrm{ and }\phi(y)-rc<t\leq \phi(y)\}.
\end{eqnarray*}
We have 
\[B_Y((x,s),r)\subset (B_1\cup B_2\cup B_3)\cap Y.\]
Indeed, if $(y,t)\in B_Y((x,s),r)$, then, using Proposition~\ref{propdpi}, we have $d_\pi((x,s),(y,t))\leq c d_Y((x,s),(y,t))<cr$. Thus, by definition of $d_\pi$, there are three possibilities:
\begin{itemize}
\item  If $d_\pi((x,s),(y,t))=d(x,y)+|s-t|$, then $d(x,y)<cr$ and $|s-t|<cr$. Thus $(y,t)\in B_1$;
\item If $d_\pi((x,s),(y,t))=d(Tx,y)+\phi(x)-s+t$, then $d(Tx,y)<cr$ and $0\leq t<cr$ (since $\phi(x)-s\geq 0$ and $(y,t)\in Y$). Thus $(y,t)\in B_2$;
\item If $d_\pi((x,s),(y,t))=d(x,Ty)+\phi(y)-t+s$, then $d(T^{-1}x,y)\leq L d(x,Ty)<Lcr$. Since $s\geq0$, we have $\psi(y)-t<cr$ and since $(y,t)\in Y$, we have $t\leq \phi(y)$ . Thus $(y,t)\in B_3$.
\end{itemize}
Using the definition of $\nu$ we have:
\begin{eqnarray*}
\nu(B_1\cap Y)&\leq&\frac{1}{\int_X\phi d\mu}2rc\mu(B(x,cr)),\\
\nu(B_2\cap Y)&\leq&\frac{1}{\int_X\phi d\mu}rc\mu(B(Tx,cr)),\\
\nu(B_2\cap Y)&\leq&\frac{1}{\int_X\phi d\mu}rc\mu(B(T^{-1}x,Lcr)).
\end{eqnarray*}

Denoting $c_1=\max\{c,Lc\}$, we have
\begin{align*}
\int_Y\nu(B_Y((x,s),r))d\nu(x,s)&\leq \int_Y\nu(B_1\cap Y)+\nu(B_2\cap Y)+\nu(B_3\cap Y)d\nu(x,s)\\
&\hspace{-3.5cm}\leq  \frac{2c}{\int_X\phi d\mu}r\left(\int_X \mu(B(x,c_1r))d\mu+\int_X \mu(B(Tx,c_1r))d\mu+\int_X \mu(B(T^{-1}x,c_1r))d\mu\right)\\
&\hspace{-3.5cm}=\frac{6c}{\int_X\phi d\mu}r\int_X \mu(B(x,c_1r))d\mu
\end{align*}
since $\mu$ is $T$-invariant and $T^{-1}$-invariant. 

Finally we obtain
\begin{equation}\label{cnugeqcmu}
\liminf_{r\rightarrow0}\frac{\log\int_Y\nu(B_Y((x,s),r))d\nu(x,s)}{\log r}\geq \lim_{r\rightarrow0}\frac{\log \frac{6c}{\int_X\phi d\mu}r\int_X \mu(B(x,c_1r))d\mu}{\log r}=1+C_\mu.
\end{equation}
Thus by \eqref{cnuleqcmu} and \eqref{cnugeqcmu}, the theorem is proved.
\end{proof}

\section{A class of examples with orbits remoteness}
\label{sec:skew}

In this section, we give an example of a class of mixing systems were \eqref{eq:WTS} fails to hold, see Remark~\ref{rmk:noind} below for the relation to the other results in this paper. This family of systems was defined in \cite{GRS} and its mixing and recurrence/hitting times properties were studied.

We will consider a class of systems constructed as follows. The base is a
measure preserving system $(\Omega ,T,\mu )$. We assume that $T$ is a
piecewise expanding Markov map on a finite-dimensional Riemannian manifold $%
\Omega $, that is:

\begin{itemize}
\item there exists some constant $\beta >1$ such that $\Vert D_{x}T^{-1}\Vert
\leq \beta^{-1}$ for every $x\in \Omega $.

\item There exists a collection $\mathcal{J}=\{J_1,\ldots,J_p\}$ such that
each $J_i$ is a closed proper set and

(M1) $T$ is a $C^{1+\eta}$ diffeomorphism from $\inte J_i$ onto its image;

(M2) $\Omega =\cup_i J_i$ and $\inte J_i\cap \inte J_j=\emptyset$ unless $%
i=j $;

(M3) $T(J_i)\supset J_j$ whenever $T(\inte J_i)\cap \inte J_j\neq\emptyset$.
\end{itemize}

$\mathcal{J}$ is called a Markov partition. It is well known that such a
Markov map is semi-conjugated to a subshift of finite type. Without loss of
generality we assume that $T$ is topologically mixing, or equivalently that
for each $i$ there exists $n_i$ such that $T^{n_i}J_i=\Omega$. We assume
that $\mu$ is the equilibrium state of some potential $\psi\colon\Omega\to 
\mathbb{R}$,  H\"older continuous in each interior of
the $J_i$'s. 
The sets of the form $J_{i_0,\ldots,i_{q-1}}:=\bigcap_{n=0}^{q-1} T^{-n}J_{i_n}$ are called \emph{cylinders of size $q$} and we
denote their collection by $\mathcal{J}_q$.

In this setting, the correlation dimension of $\mu$ exists as in \cite[Theorem 1]{PW}.  Note that we could arrange our system so that our $\mu$ an acip: the density here will be bounded, so the correlation dimension is one.

The system is extended by a skew product to a system $(M,S)$ where $M=\Omega
\times \mathbb{T}$ and $S:M\rightarrow M$ is defined by
\begin{equation*}
S(\omega ,t)=(T\omega ,t+\alpha\varphi (\omega ))  \label{skewprod}
\end{equation*}
where $\varphi =1_{I}$ is the characteristic function of a set $I\subset \Omega $
which is a union of cylinders. In this system the second coordinate is
translated by $\mathbf{\alpha }$ if the first coordinate belongs to $I$. We
endow $(M,S)$ with the invariant measure $\nu=\mu \times Leb$ (so $C_\nu=C_\mu+1$). On $\Omega \times \mathbb{T}$ we will consider the sup distance.

We make the standing assumption on our choice of $\varphi$ that 

\begin{itemize}
\item (NA) for any $u\in \lbrack -\pi ,\pi ]$, the
equation $fe^{iu\varphi }=\lambda f\circ T$, where $f$ is H\"{o}lder (on the subshift) and $\lambda
\in S^{1}$, has only the trivial solutions $\lambda =1$ and $f$ constant. 
\end{itemize}

The
simple case where the $I$ which defines $\varphi$ is a nonempty union of size $1$ cylinders such
that both $I$ and $I^{c}$ contain a fixed point fulfils this assumption.

\begin{definition}
\label{type}Given an irrational number $\alpha $ we define the irrationality exponent
of $\alpha $ as the following (possibly infinite) number: 
\begin{equation*}\label{gamma}
\gamma (\alpha )=\inf \{\beta :\liminf_{q\rightarrow \infty }q^{\beta }\|
q\alpha \| >0\}
\end{equation*}
where $\| \cdot \|$ indicates the distance to the
nearest integer number in $\mathbb{R}$.
\end{definition}

First note that $\gamma(\alpha)\ge 1$ for any irrational $\alpha$. 

\begin{remark} By \cite[Theorem 19]{GRS}, if $\gamma(\alpha)>d_\mu+1$, then the Hitting Time Statistics is typically degenerate.  This is an indirect way of seeing that there cannot be an inducing scheme satisfying \eqref{eq:WTS}, otherwise \cite[Theorem 2.1]{BruSauTroVai03} would be violated; it also suggests that the conclusions of Theorem~\ref{thm:induced_exp} will also not hold here, which we show below is indeed the case.
\label{rmk:noind}
\end{remark}

\begin{theorem}
For $\nu\times\nu$-a.e. $x,y\in M$ we have
\begin{equation*}
\limsup_{n}\frac{\log \m_{S, n}(x, y)}{-\log n}\leq\min\left(\frac{2}{C_\nu},1\right)=\min\left(\frac{2}{C_\mu+1},1\right)
\end{equation*}
and
\begin{equation}\label{eqliminfgamma}
\liminf_{n}\frac{\log \m_{S,n}(x, y)}{-\log n}\leq\min\left(\frac{2}{C_\nu},\frac{1}{\gamma(\alpha)}\right)=\min\left(\frac{2}{C_\mu+1},\frac{1}{\gamma(\alpha)}\right).
\end{equation}
\end{theorem}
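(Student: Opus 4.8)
The plan is to split the bound into a ``base'' contribution, which comes for free from Theorem~\ref{thm:BLR}, and a ``fibre'' contribution, which is an elementary Diophantine estimate for the rotation orbit in $\mathbb{T}$: the first yields the $2/C_\nu$ terms and the second yields the $1$ and $1/\gamma(\alpha)$ terms. Since $\nu=\mu\times Leb$ we have $\underline C_\nu=C_\nu=C_\mu+1>0$, so Theorem~\ref{thm:BLR} applied to $(M,S,\nu)$ gives $\limsup_n \frac{\log\m_{S,n}(x,y)}{-\log n}\le \frac{2}{C_\nu}$ for $\nu\times\nu$-a.e.\ $(x,y)$, hence also $\liminf_n(\cdots)\le\frac{2}{C_\nu}$. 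It remains to prove $\limsup_n(\cdots)\le 1$ and $\liminf_n(\cdots)\le 1/\gamma(\alpha)$, and for both I use only the second coordinate. Write $x=(\omega,t)$, $y=(\omega',t')$, $s:=t-t'$, and note that under $\nu\times\nu$ the number $s$ is uniformly distributed on $\mathbb{T}$, so a Lebesgue-full set of $s$ corresponds to a $\nu\times\nu$-full set of $(x,y)$. Because $\varphi=\mathbbm{1}_I$ the Birkhoff sum $S_i\varphi(\omega)$ is an integer in $\{0,\dots,i\}$, so for $0\le i,j\le n-1$ the second coordinates of $S^ix$ and $S^jy$ differ by $s+K\alpha$ with $K=S_i\varphi(\omega)-S_j\varphi(\omega')\in\{-(n-1),\dots,n-1\}$; since $M$ carries the sup metric this gives
\[\m_{S,n}(x,y)\ \ge\ \min_{|K|\le n-1}\|s+K\alpha\|\ =\ \dist(s,\cR_{n-1}),\qquad \cR_m:=\{K\alpha\bmod 1:\ |K|\le m\}\subset\mathbb{T}.\]

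For $\limsup\le 1$: since $|\cR_n|\le 2n+1$ we have $Leb\{s:\dist(s,\cR_n)<\delta\}\le(2n+1)2\delta$, so taking $\delta=2^{-\ell(1+\eps)}$ along $n=2^\ell$ the right-hand side is summable in $\ell$; Borel--Cantelli then gives, for a.e.\ $s$, $\dist(s,\cR_{2^\ell})\ge 2^{-\ell(1+\eps)}$ for all large $\ell$, and monotonicity of $m\mapsto\cR_m$ and of $n\mapsto\m_{S,n}$ upgrades this to $\m_{S,n}(x,y)\ge(2n)^{-(1+\eps)}$ for all large $n$, i.e.\ $\limsup_n(\cdots)\le 1+\eps$; let $\eps\downarrow 0$. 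For $\liminf\le 1/\gamma(\alpha)$ I exploit the clustering of $\cR_n$ forced by good rational approximations of $\alpha$: if $q\in\N$ and $q\alpha=p\pm\|q\alpha\|$ with $p\in\Z$, then writing an integer $K$ with $|K|\le n$ as $K=aq+b$ with $0\le b<q$ and $|a|\le n/q+1$ gives $K\alpha\equiv a(q\alpha-p)+b\alpha\pmod 1$, so every point of $\cR_n$ lies within $w:=(n/q+1)\|q\alpha\|$ of the $q$-point set $\{b\alpha\bmod 1:0\le b<q\}$; hence
\[Leb\big\{s:\ \dist(s,\cR_n)\le\delta\big\}\ \le\ 2q\,(\delta+w).\]

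To finish, fix $\eps\in(0,1-1/\gamma(\alpha))$ (larger $\eps$ being already covered by $\limsup\le 1$), choose $\beta<\gamma(\alpha)$ with $\beta>\gamma(\alpha)/(1+\gamma(\alpha)\eps)$ — possible since the right side is $<\gamma(\alpha)$, and if $\gamma(\alpha)=\infty$ simply take $\beta>1/\eps$ — and pick $\theta$ with $\max\{1,\gamma(\alpha)/(1+\gamma(\alpha)\eps)\}<\theta<\beta$. By the definition of $\gamma(\alpha)$ there are infinitely many $q$ with $\|q\alpha\|\le q^{-\beta}$; extract a subsequence $q_\ell$ with $q_{\ell+1}\ge 2q_\ell$ and set $n_\ell:=\lceil q_\ell^{\theta}\rceil$, $\delta_\ell:=n_\ell^{-1/\gamma(\alpha)-\eps}$. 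Inserting $w_\ell\le(n_\ell/q_\ell+1)q_\ell^{-\beta}$ into the last display bounds $Leb\{s:\dist(s,\cR_{n_\ell})\le\delta_\ell\}$ by a constant times $q_\ell^{\,1-\theta(1/\gamma(\alpha)+\eps)}+q_\ell^{\,\theta-\beta}+q_\ell^{\,1-\beta}$, and the three exponents are all negative by the choice of $\theta,\beta$; since $q_\ell$ grows geometrically this is summable, so Borel--Cantelli gives, for a.e.\ $s$, $\dist(s,\cR_{n_\ell})>\delta_\ell$ for all large $\ell$, whence $\m_{S,n_\ell}(x,y)>n_\ell^{-1/\gamma(\alpha)-\eps}$ infinitely often and $\liminf_n(\cdots)\le 1/\gamma(\alpha)+\eps$. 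Letting $\eps\downarrow 0$ along a sequence finishes the proof.

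The main obstacle is the exponent bookkeeping in the last step: the window $n_\ell$ must be chosen large enough that $q_\ell\delta_\ell\to 0$ (forcing $n_\ell\gg q_\ell^{\gamma(\alpha)/(1+\gamma(\alpha)\eps)}$) yet small enough that $\cR_{n_\ell}$ genuinely collapses onto $\lesssim q_\ell$ tight clusters (forcing $n_\ell\ll q_\ell^{\beta}$), and these two requirements are simultaneously met precisely because $\beta$ may be taken arbitrarily close to $\gamma(\alpha)$ — which is exactly what the definition of the irrationality exponent provides. Everything else (the reduction to the fibre, the appeal to Theorem~\ref{thm:BLR} for the $2/C_\nu$ bounds, the elementary $\limsup\le 1$ estimate, and the transfer of Lebesgue-null exceptional sets of $s$ to $\nu\times\nu$-null sets of $(x,y)$) is routine.
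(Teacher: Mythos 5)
Your proposal is correct, and its skeleton is the same as the paper's: the $2/C_\nu$ terms come from applying Theorem~\ref{thm:BLR} to $(M,S,\nu)$ together with $C_\nu=C_\mu+1$, and the $1$ and $1/\gamma(\alpha)$ terms come from projecting onto the fibre, using that $S_i\varphi(\omega)-S_j\varphi(\tilde\omega)$ is an integer of modulus at most $n-1$, so that $\m_{S,n}(x,y)\ge\min_{|K|\le n-1}\|(t-t')+K\alpha\|$. The difference is how the fibre bound is finished: the paper simply recognises the right-hand side as $\m_{R,n}(t,t')$ for the rotation $R(s)=s+\alpha$ and quotes \cite[Theorems 1 and 10]{BarLiaRou19}, whereas you reprove the needed rotation estimates from scratch --- the counting/Borel--Cantelli argument giving $\dist(s,\cR_n)\gtrsim n^{-(1+\eps)}$ eventually for a.e.\ $s$ (hence $\limsup\le 1$), and the clustering estimate that for $\|q\alpha\|\le q^{-\beta}$ the set $\cR_n$ lies within $(n/q+1)\|q\alpha\|$ of $q$ points, combined with the subsequence $n_\ell\asymp q_\ell^{\theta}$ with $\gamma/(1+\gamma\eps)<\theta<\beta<\gamma$ (hence $\liminf\le 1/\gamma(\alpha)$). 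I checked your exponent bookkeeping ($1-\theta(1/\gamma+\eps)$, $\theta-\beta$, $1-\beta$ all negative, and $\gamma/(1+\gamma\eps)>1$ precisely when $\eps<1-1/\gamma$, with the degenerate cases $\gamma=1$ and $\gamma=\infty$ handled as you indicate) and the Fubini transfer from Lebesgue-null sets of $s=t-t'$ to $\nu\otimes\nu$-null sets; it all works. What your route buys is self-containedness: the fibre estimates are elementary and do not rely on the rotation results of \cite{BarLiaRou19} (you are essentially reproving the upper bounds in their Theorem 10). What the paper's route buys is brevity, plus it records the intermediate inequality $\m_{S,n}\ge\max(\m_{T,n},\m_{R,n})$ including the base term $2/C_\mu$, which both you and the paper ultimately discard since $2/C_\nu<2/C_\mu$; omitting it, as you do, loses nothing for the stated theorem.
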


\begin{proof}
First of all, applying Theorem~\ref{thm:BLR} to $S$ and since one can easily show that $C_\nu=C_\mu+1$, we obtain for $\nu\times\nu$-a.e. $x,y\in M$

\begin{equation*}
\liminf_{n}\frac{\log \m_{S,n}(x, y)}{-\log n}\leq\limsup_{n}\frac{\log \m_{S, n}(x, y)}{-\log n}\leq\frac{2}{C_\nu}=\frac{2}{C_\mu+1}.
\end{equation*}

Moreover, one can observe that for $x=(\omega,t)\in M$ and $y=(\tilde\omega,s)\in M$ 
\begin{align*}
\m_{S,n}(x,y)&= \min_{0\le i,j\le n-1} \max\left(d(T^i(\omega), T^j(\tilde\omega)),\|(t-s)+\alpha(S_i\varphi(\omega)-S_j\varphi(\tilde\omega))\|\right)\\
&\hspace{-1cm}\geq \max\left(\min_{0\le i,j\le n-1} d(T^i(\omega), T^j(\tilde\omega)),\min_{0\le i,j\le n-1}\|(t-s)+\alpha(S_i\varphi(\omega)-S_j\varphi(\tilde\omega))\|\right)\\
&\hspace{-1cm}\geq \max\left(\min_{0\le i,j\le n-1} d(T^i(\omega), T^j(\tilde\omega)),\min_{-(n-1)\le i\le n-1}\|(t-s)+i\alpha)\|\right)\\
&\hspace{-1cm}=\max \left(\m_{T,n}(\omega,\tilde\omega),\m_{R,n}(t,s)\right)
\end{align*}
where $R:\mathbb{T}\mapsto\mathbb{T}$ with $R(s)=s+\alpha$.
Thus, by \cite[Theorems 1 and 10]{BarLiaRou19} we obtain
\begin{eqnarray*}
\limsup_n\frac{\log \m_{S, n}(x, y)}{-\log n}&\leq&\min\left(\limsup_n\frac{\log \m_{T,n}(\omega,\tilde\omega)}{-\log n},\limsup_n\frac{\log\m_{R,n}(t,s)}{-\log n}\right)\\
&\leq&\min\left(\frac{2}{C_\mu},1\right)
\end{eqnarray*}
and
\begin{eqnarray*}
\underset{n}{\liminf}\frac{\log \m_{S,n}(x, y)}{-\log n}&\leq&\min\left(\underset{n}{\liminf}\frac{\log \m_{T,n}(\omega,\tilde\omega)}{-\log n},\underset{n}{\liminf}\frac{\log\m_{R,n}(t,s)}{-\log n}\right)\\
&\leq&\min\left(\frac{2}{C_\mu},\frac{1}{\gamma(\alpha)}\right).
\end{eqnarray*}
Finally, since $C_\nu=C_\mu+1>C_\mu$, the theorem is proved.
\end{proof}

Finally, we prove that if $\mu$ is a Bernoulli measure, then inequality \eqref{eqliminfgamma} is sharp.

\begin{theorem}
We assume that all the branches of the Markov map $T$ are full, i.e. $%
T(J_{i})=\Omega $ for all $i$, that $\mu $ is a Bernoulli measure i.e. $\mu
([a_{1}\ldots a_{n}])=\mu ([a_{1}])\cdots \mu ([a_{n}])$, and $I$ depends
only on the first symbol, i.e. $I$ is an union of $1$-cylinders (recall that 
$\varphi =1_{I}$).

If $\gamma(\alpha)> d_\mu+1$ then
\begin{equation}
\liminf_{n}\frac{\log \m_{S,n}(x, y)}{-\log n}=\frac{1}{\gamma(\alpha)}< \frac{2}{C_\nu} \qquad \nu\times\nu\text{-a.e. } x,y,
\end{equation}
\end{theorem}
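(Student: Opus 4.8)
We prove the matching lower bound $\liminf_{n}\frac{\log \m_{S,n}(x,y)}{-\log n}\geq \frac1{\gamma(\alpha)}$ for $\nu\times\nu$-a.e.\ $x,y$; combined with \eqref{eqliminfgamma} this gives the equality, and since $\gamma(\alpha)>d_\mu+1=C_\nu$ we have $\min(2/C_\nu,1/\gamma(\alpha))=1/\gamma(\alpha)<2/C_\nu$, so the full statement follows. Fix $\delta>0$, set $r_n:=n^{-1/\gamma(\alpha)+\delta}$, and let $\ell_n\asymp\log(1/r_n)$ be a suitable integer so that agreement of two orbit points on $\ell_n$ symbols forces their distance to be $\le r_n$; the lower bound is equivalent to: $\m_{S,n}(x,y)\le r_n$ for all large $n$ and $\nu\times\nu$-a.e.\ $(x,y)$, after which we let $\delta\downarrow 0$. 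Writing $x=(\omega,t)$, $y=(\tilde\omega,s)$ and $\sigma_{ij}:=S_i\varphi(\omega)-S_j\varphi(\tilde\omega)\in\Z$, recall $\m_{S,n}(x,y)=\min_{0\le i,j<n}\max(d(T^i\omega,T^j\tilde\omega),\|(t-s)+\alpha\sigma_{ij}\|)$. The decisive observation, using that $I$ is a union of $1$-cylinders (so $\varphi\circ T^k$ depends only on the $k$th symbol), is that $\sigma$ is \emph{constant along symbolic matches}: if $T^i\omega$ and $T^j\tilde\omega$ lie in a common cylinder of length $\ell$, then $\varphi(T^{i+a}\omega)=\varphi(T^{j+a}\tilde\omega)$ for $0\le a<\ell$, hence $\sigma_{i+a,j+a}=\sigma_{ij}$ for $0\le a\le\ell$. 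Consequently, if
$$\mathcal M_n(\omega,\tilde\omega):=\{\sigma_{ij}:0\le i,j<n,\ T^i\omega\text{ and }T^j\tilde\omega\text{ agree on their first }\ell_n\text{ symbols}\},$$
then $\m_{S,n}(x,y)\le r_n$ whenever $t-s$ lies within $r_n$ of $\{-\alpha\sigma:\sigma\in\mathcal M_n(\omega,\tilde\omega)\}$. So it suffices to show, for all large $n$: \textbf{(a)} $\mathcal M_n(\omega,\tilde\omega)\supseteq\Z\cap[-cn,cn]$ for $\nu\times\nu$-a.e.\ $(\omega,\tilde\omega)$, where $c:=\tfrac12\mu(I)>0$; and \textbf{(b)} $\{-\alpha\sigma:\sigma\in\Z,\ |\sigma|\le cn\}$ is $r_n$-dense in $\mathbb T$.

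Part \textbf{(b)} is the arithmetic of $\alpha$: one has $\gamma(\alpha)=\limsup_k(\log q_{k+1})/(\log q_k)$ for the continued-fraction denominators $q_k$ of $\alpha$, so for any $\beta>\gamma(\alpha)$ one has $q_{k+1}\le q_k^{\beta}$ eventually, whence by the three-distance theorem the largest gap of $\{\sigma\alpha:|\sigma|\le cn\}$ in $\mathbb T$ is $\lesssim n^{-1/\beta}$; choosing $\beta$ with $1/\beta>1/\gamma(\alpha)-\delta$ makes this $\le r_n$ for all large $n$. Part \textbf{(a)} is the technical core. Fix $\sigma_0\in\Z\cap[-cn,cn]$ and let $W_n(\sigma_0)$ count the pairs $(i,j)\in[0,n)^2$ with $\sigma_{ij}=\sigma_0$ \emph{and} with $T^i\omega$, $T^j\tilde\omega$ agreeing on their first $\ell_n$ symbols. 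The event $\{\sigma_{ij}=\sigma_0\}$ depends only on $\omega_0,\dots,\omega_{i-1}$ and $\tilde\omega_0,\dots,\tilde\omega_{j-1}$, the matching event only on $\omega_i,\dots,\omega_{i+\ell_n-1}$ and $\tilde\omega_j,\dots,\tilde\omega_{j+\ell_n-1}$; as $\mu$ is Bernoulli these are independent, with probabilities $\asymp(i+j)^{-1/2}$ (local limit theorem for the walk $S_i\varphi(\omega)-S_j\varphi(\tilde\omega)$, for $|(i-j)\mu(I)-\sigma_0|\lesssim\sqrt{i+j}$) and $\asymp r_n^{C_\mu}$ respectively, so $\E W_n(\sigma_0)\asymp n\, r_n^{C_\mu}=n^{1-C_\mu/\gamma(\alpha)+o(1)}\to\infty$ --- this is where the hypothesis $\gamma(\alpha)>d_\mu+1$ (hence $\gamma(\alpha)>C_\mu$, as $d_\mu=C_\mu$ here) enters, guaranteeing divergence. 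To turn this into a summably small bound on $\mathbb P(W_n(\sigma_0)=0)$, condition on the two nondecreasing unit-step walks $i\mapsto S_i\varphi(\omega)$, $j\mapsto S_j\varphi(\tilde\omega)$: off an event of probability $\le n^{-K}$ for any $K$ (Chernoff) they hit every level in their ranges, which have length $\asymp n$, so among the pairs with $\sigma_{ij}=\sigma_0$ one can select $\gtrsim n/\log n$ whose coordinates are pairwise more than $\ell_n$ apart; conditionally, the matching events at these pairs are independent with probability $\asymp r_n^{C_\mu}$, giving $\mathbb P(W_n(\sigma_0)=0)\le n^{-K}+\exp(-c\,n^{1-C_\mu/\gamma(\alpha)+o(1)}/\log n)$. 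A union bound over the $O(n)$ targets $\sigma_0$ and over $n$, with $K$ large, together with Borel--Cantelli, yields (a).

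Combining (a) and (b): for $\nu\times\nu$-a.e.\ $(\omega,\tilde\omega)$ and all large $n$ the set $\{-\alpha\sigma:\sigma\in\mathcal M_n(\omega,\tilde\omega)\}$ is $r_n$-dense in $\mathbb T$, so $\m_{S,n}((\omega,t),(\tilde\omega,s))\le r_n$ for \emph{every} $(t,s)$; hence $\m_{S,n}(x,y)\le r_n$ for $\nu\times\nu$-a.e.\ $(x,y)$ and all large $n$, and letting $\delta\downarrow 0$ finishes the proof. The main obstacle is part (a): one must control the two symbol-counting walks (to realise a \emph{prescribed} skew value $\sigma_0$) simultaneously with the occurrence of long symbolic matches (to keep the base orbits $r_n$-close), and then push the failure probabilities below a summable threshold uniformly in $\sigma_0$ --- this large-deviation bookkeeping, close in spirit to the second-moment estimates of \cite{BarLiaRou19} and to the analysis in \cite{GRS}, is where the real work lies.
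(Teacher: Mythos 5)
Your top-level logic is right (the upper bound comes from \eqref{eqliminfgamma}, and a matching lower bound follows if $\m_{S,n}(x,y)\le r_n=n^{-1/\gamma(\alpha)+\delta}$ for all large $n$, a.e.), but your route is very different from the paper's and its core step has a genuine gap. The paper does not construct the close approaches by hand: it quotes \cite[Proposition 21]{GRS}, which bounds the hitting time $W_r(x,y)=\inf\{k\geq1: S^k(x)\in B(y,r)\}$ above by $r^{-(\gamma(\alpha)+\epsilon)}$ eventually (for $\nu$-a.e.\ $x$, any $y$, using $\gamma(\alpha)\ge d_\mu+1$), and then observes that $W_r(x,y)\le n$ forces $\m_{S,\lceil n\rceil}(x,y)<r$ by taking $j=0$ in the definition of $\m_{S,n}$. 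That is the whole proof; all of the ``large-deviation bookkeeping'' you correctly identify as the real work is already packaged inside the GRS hitting-time result.

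The gap in your part (a): after conditioning on the two walks $i\mapsto S_i\varphi(\omega)$ and $j\mapsto S_j\varphi(\tilde\omega)$ and selecting $\gtrsim n/\log n$ well-separated pairs realizing $\sigma_0$, you assert the matching events are conditionally independent ``with probability $\asymp r_n^{C_\mu}$''. Independence across disjoint symbol blocks is fine for a Bernoulli measure, but the conditional matching probability is not $\asymp r_n^{C_\mu}$: conditioning on the walks fixes the $I$-indicator of every symbol, and the event that $T^i\omega$ and $T^j\tilde\omega$ agree on $\ell_n$ symbols has conditional probability zero unless the indicator patterns of $\omega$ on $[i,i+\ell_n)$ and of $\tilde\omega$ on $[j,j+\ell_n)$ coincide. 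Whether they coincide is determined by the very walks you conditioned on, so for a bad (and not exponentially rare) realization none of your selected pairs need admit a match, and the bound $\P(W_n(\sigma_0)=0)\le n^{-K}+\exp(-cn^{1-C_\mu/\gamma+o(1)}/\log n)$ is not established. Repairing this requires a further concentration argument showing that, with failure probability $o(n^{-2})$ uniformly in $\sigma_0$, many selected pairs also have matching indicator patterns, together with a lower bound on the conditional matching probability given such agreement. Two smaller points: $d_\mu=C_\mu$ is not generally true for Gibbs/Bernoulli measures --- you only need, and only have, $C_\mu\le d_\mu$, which already yields $\gamma(\alpha)>C_\nu$; and the local limit theorem estimate $\E W_n(\sigma_0)\asymp n\,r_n^{C_\mu}$ plays no role in your final bound, so it is a heuristic rather than part of the proof.
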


\begin{proof}
First of all, we recall that $C_\mu\leq d_\mu$ (see e.g. \cite{Pes93}), thus our assumption on $\alpha$ implies that $1/\gamma(\alpha)<2/C_\nu$, so \eqref{eqliminfgamma} implies $$\liminf_{n}\frac{\log \m_{S,n}(x, y)}{-\log n}\le\frac{1}{\gamma(\alpha)}.$$  So it remains to show the reverse of the above inequality.

By \cite[Proposition 21]{GRS}, for any $y$, for $\nu$-a.e. $x$ we have 
\begin{equation}\label{waiting}
\limsup_{r\rightarrow0}\frac{\log W_r(x,y)}{-\log r}\leq \max ({d}_{\mu }+1,\gamma(\alpha))
\end{equation}
where $ W_r(x,y)=\inf\{k\geq1, S^k(x)\in B(y,r)\}$.

Let $\epsilon>0$ and let $x,y$ such that \eqref{waiting} holds. Since $\gamma(\alpha) \geq d_\mu+1$, for any $r$ small enough we have
\[ W_r(x,y)\leq r^{-(\gamma(\alpha)+\epsilon)}\]
which implies that
\[\m_{S, \lceil r^{-(\gamma(\alpha)+\epsilon)}\rceil}(x,y)<r.\]
Thus, for any $r$ small enough
\[\frac{\log \m_{S, \lceil r^{-(\gamma(\alpha)+\epsilon)} \rceil}(x,y)}{-\log  \lceil r^{-(\gamma(\alpha)+\epsilon)} \rceil}>\frac{1}{\gamma(\alpha)+\epsilon}\]
and then 
\[\liminf_{n}\frac{\log \m_{S,n}(x, y)}{-\log n}\geq\frac{1}{\gamma(\alpha)+\epsilon}.\]
The theorem is proved taking $\epsilon$ arbitrary small.
\end{proof}

\end{document}